\newcommand{\al}{\alpha}
\newcommand{\be}{\beta}
\newcommand{\ga}{\gamma}
\newcommand{\de}{\delta}
\newcommand{\auskommentieren}[1]{}
\newcommand{\beq}{\begin{equation}}
\newcommand{\eeq}{\end{equation}}
\newcommand{\bea}{\begin{equation}\begin{aligned}}
\newcommand{\eea}{\end{aligned}\end{equation}}
\newtheorem{theorem}{Theorem}[section]
\newtheorem{lem}[theorem]{Lemma}
\newtheorem{cor}[theorem]{Corollary}
\theoremstyle{definition}
\newtheorem{rem}[theorem]{Remark}
\newtheorem{ass}[theorem]{Assumption}
\numberwithin{equation}{section}
\DeclareMathOperator{\pr}{pr}
\DeclareMathOperator{\graph}{graph}
\DeclareMathOperator{\dist}{dist}
\DeclareMathOperator{\diam}{diam}
\DeclareMathOperator{\dive}{div}
\DeclareMathOperator{\inte}{int}
\DeclareMathOperator{\width}{width}
\DeclareMathOperator{\circl}{circle}
\DeclareMathOperator{\co}{conv}
\title[Flowing the leaves of a foliation]
{Flowing the leaves of a foliation with  normal speed given by the logarithm of general curvature functions}
\author{Heiko Kr\"oner}
\address{Universit\"at Duisburg-Essen, Fakult\"at f\"ur Mathematik, Thea-Leymann-Stra\ss e 9, 45127 Essen, Germany}
\email{heiko.kroener@uni-due.de}
\subjclass[2010]{53C44; 35K55; 35B40} 
\keywords{geometric evolution equation, asymptotic behavior}
\begin{document}
\maketitle
\begin{abstract}
Generalizing results of Chou and Wang \cite{1} we study the flows of the leaves $(M_{\Theta})_{\Theta>0}$ of a foliation of 
$\mathbb{R}^{n+1}\setminus \{0\}$ consisting of uniformly convex hypersurfaces
in the direction of their outer normals with speeds 
$-\log(F/f)$. For quite general functions $F$ of the principal curvatures of the flow hypersurfaces  and
$f$ a smooth and positive function on $S^n$ (considered as a function of the normal)
we show that there is a distinct leaf $M_{\Theta_{*}}$ in this foliation with the property that
the flow starting from $M_{\Theta_{*}}$ converges to a translating solution of the flow
equation. Furthermore, when starting the 
flow from a leave inside $M_{\Theta_{*}}$ it shrinks to a point and when starting the 
flow from a leave outside $M_{\Theta_{*}}$ it expands to infinity. While \cite{1} considered this mechanism with $F$ equal to the Gauss curvature we allow $F$ to be among others
the elementary symmetric polynomials $H_k$. We, furthermore, show that such kind of behavior is robust 
with respect to relaxing certain assumptions at least in the rotationally symmetric and homogeneous degree one curvature function case.
 \end{abstract}

\tableofcontents

\section{Introduction}
Chou and Wang \cite{1} study a logarithmic Gauss curvature flow
of the leaves of a foliation of $\mathbb{R}^{n+1}\setminus \{0\}$ consisting
of a homothetic family of uniformly convex hypersurfaces. While their main purpose in doing so is to provide
a variational reproof of the Minkowski problem we focus in our paper
on the tool of the flow of the leaves of a foliation itself in a more general context, i.e. for other flow speeds (and depending on the flow speeds under less or more restrictive assumptions on the 
foliation).
In our paper the flow speeds in the direction of the outer unit normal of the flow hypersurfaces are of type $-\log (F/f)$
where $F$ is a curvature function of the principal curvatures and $f$ is a smooth positive function on $S^n$ which we consider via the Gauss map
also being defined on uniformly convex hypersurfaces.

Throughout the paper we make the following assumption.

\begin{ass} \label{ass1}
Either 

(i) the inverse $\tilde F$ of $F\in C^{\infty}(\Gamma_+)\cap C^0(\bar \Gamma_+)$ (where $\Gamma_+$ denotes the positive cone in $\mathbb{R}^n$ and see for a definition of the inverse of a curvature function \eqref{definition_inverse}) satisfies Assumption \ref{defi_F} 
and $f$ is a positive function on $S^n=\{x \in \mathbb{R}^{n+1}:|x|=1\}$
or  

(ii) the inverse $\tilde F$ of $F\in C^{\infty}(\bar \Gamma_+)$ satisfies Assumption \ref{defi_F_2}, 
\beq
F, \frac{\partial F}{\partial \kappa_i}>0
\eeq  
in
\beq
\inte (\bar \Gamma_+ \cap \{\kappa_1=0\}) \cup \Gamma_+
\eeq  
(Here, the interior is relative to $\{\kappa_1=0\}$ and note that $F$ is symmetric.) and the $M_{\Theta}$ are rotationally symmetric
with respect to a fixed axis. 
Furthermore, we choose this fixed axis as the $x$-axis in an appropriately chosen Euclidean coordinate system $(x, x_2,  ..., x_{n+1})$
in $\mathbb{R}^{n+1}$ and assume that $f$ is a positive function on $S^n$ which 
depends only on the $x$-coordinate and which satisfies
\begin{equation} \label{ass_for_f}
 f < c(n)f(0, ..., 0, 1)
\end{equation}
with some dimensional constant $c(n)>1$ where  $c(n)=\frac{n}{n-1}$ is a possible choice.
\end{ass}

Let us recall how the mechanism in \cite{1} works. 
Let  $(M_{\Theta})_{\Theta>0}$, $M_{\Theta}=\Theta M_0$, 
be a family of homothetic transformations of an embedded, closed, uniformly convex 
hypersurface $M_0$ in $\mathbb{R}^{n+1}$. 
There is exactly one $\Theta_{*}>0$ for which the flow starting from $M_{\Theta_{*}}$ at time $t=0$ with (outer) normal speed 
\beq \label{Gauss_speed}
-\log(K/f),
\eeq
 $K$
the Gauss curvature, 
converges to a translating solution of the flow equation. 
Here, $f$ is a positive function of the normal and $K$ the Gauss curvature of the flow hypersurfaces. The
flows with (outer) normal speed also according to \eqref{Gauss_speed} and
starting from $M_{\Theta}$ shrink to a point in the case $\Theta<\Theta_{*}$ and converge to expanding spheres in the case $\Theta>\Theta_{*}$.
The limit speed $\xi\in \mathbb{R}^{n+1}$ for the translating limit hypersurface of the flow
starting from $M_{\Theta_{*}}$ is obtained
from the necessary condition for the Minkowski problem
\begin{equation} \label{necessary}
 \int_{S^n}\frac{x_i}{e^{\xi\cdot x}f(x)}=0, \quad i=1, ..., n+1,
\end{equation}
and hence convergence to a translating hypersurface with Gauss curvature $e^{\xi\cdot x}f(x)$ is deduced. 
A necessary condition like (\ref{necessary}) 
is not known to us for the problem of finding hypersurfaces with general 
prescribed curvatures. See \cite{GuanGuan} where this difficulty is discussed e.g. for the mean curvature. 
Hence the natural goal without any further symmetry assumptions in the case $\Theta=\Theta_{*}$ for our generalization of the above mechanism 
is to obtain
convergence to a translating solution, cf. Theorem \ref{new_main_result}
for our precise main result.

Note that working with a foliation and not only with a single initial hypersurface is a kind of  natural since even on small $C^2$ perturbations of the suitably scaled sphere the flow speed has no sign in general.
The latter phenomenon is in contrast to the cases of contracting flows with mean curvature flow as the prototype \cite{Huisken0, Gage}, compare also with Gauss curvature flow \cite{Gauss_flow, AndrewsInventiones, Brendle} and expanding flows with inverse mean curvature flow as the prototype \cite{G1, Urbas}. 
While these references describe the phenomenon in the smooth parametric case of driving (especially) uniformly convex 
initial hypersurfaces after appropriate rescaling to unit spheres in the smooth topology,
 mean curvature flow appeared in \cite{Brakke} in a  weak formulation,
Gauss curvature flow in a physical application \cite{Firey} and inverse mean curvature flow \cite{Geroch} in relation with a quantity from general relativity which is monotone under the flow. 

As link of our paper to the literature we see mainly the aspect of translating solutions to geometric flows as well as
the wish to study fully nonlinear versions of e.g. the mean curvature flow. Both will be described shortly in the following.
Translating solutions appear e.g. as limiting behavior of rescaled mean curvature flow
of surfaces in the presence of type II singularities, 
see \cite{Angenent, HuiskenSinestrari}. Furthermore, translating solutions appear 
in the works \cite{Huisken, 100} as limiting behavior of 
solutions of non-parametric 
mean curvature evolution with Neumann boundary conditions.  
Translating solutions appear also in the 
limiting behavior of the 
second boundary value problem for certain non-parametric curvature flows \cite{4}
of strictly convex hypersurfaces. For further aspects of and literature about translating solutions see e.g. also \cite{SpruckXiao, BourniLangfordTinaglia2, Ilmanenoverview, Ilmanen} especially for recent results and an overview
of complete translating solutions of the mean curvature flow.
There has been interest to study fully nonlinear versions of the parametric mean curvature flow \cite{Huisken0}, already the special case (to which we also restrict our literature examples) in which the flow speed is a nonlinear function of the mean curvature has received attention in some papers, and we mention as literature for that latter case  e.g.  \cite{AndrewsCurve, Schulze, Schulze2, Sinestrari} for contracting flows and apart from the above already mentioned inverse mean curvature flow
\cite{Geroch, G1, Urbas}
also the work \cite{Heiko} for an expanding flow with normal speed given by a high power of the (multiplicative) inverse of the mean curvature where properties concerning the preservation of pinching known from  contracting flows  \cite{Andrews}  turn out to work as well. 
We refer to \cite{Lu} where the flow of certain leaves of a foliation by an anisotropic inverse and logarithmic harmonic mean curvature flow is studied.
Therein convergence to expanding spheres is shown provided the initial leave is outside a certain distinct leave. But the latter is not characterized as being critical in the sense that this phenomenon does not remain to be true for leaves enclosed by the distinct one. 
Our paper differs from  \cite{Lu} crucially since we study different flow speeds and our main issue is the existence of a critical leave which is not shown in the setting of that paper.

We introduce the setting of our paper more precisely and state our main results in 
Theorem \ref{new_main_result}. Let ($M_{\Theta})_{\Theta>0}$
 be a foliation of $\mathbb{R}^{n+1}\setminus \{0\}$ by
 embedded, closed, uniformly convex  hypersurfaces $M_{\Theta}$ 
 (i.e. in each point of $M_{\Theta}$ all principal curvatures are positive)
 where we assume
 that the parameter $\Theta$ can be viewed via that foliation as a smooth function in $\mathbb{R}^{n+1}\setminus\{0\}$ with non-vanishing gradient. W.l.o.g. we assume that the monotone ordering of the associated open convex bodies $C_{\Theta}$ of the $M_{\Theta}$ with respect to inclusion is increasing.
 Let $(X_{\Theta})_{\Theta>0}$
 be a family of
embeddings $X_{\Theta}:S^n\rightarrow \mathbb{R}^{n+1}$ of $M_{\Theta}$. We consider the evolution of convex hypersurfaces $M(t)$, parametrized by $X(\cdot, t)$, so that
\begin{equation}\label{1}
\frac{\partial X}{\partial t} = -\log(F/f)\nu
\end{equation}
with 
\begin{equation} \label{2}
X(p,0) = X_{\Theta}(p).
\end{equation}
Here, $\nu(p,t)$ denotes the unit outer normal of $M(t)$ at $X(p,t)$, $F$, $f$
satisfy Assumption \ref{ass1}, $F$ is evaluated at the principal curvatures $\kappa_i$
of
$M(t)$ and $f$ is considered to be defined on $M(t)$  via the Gauss map. 
Our main result is as follows, compare with \cite{1}.

\begin{theorem} \label{new_main_result}
(i) Let $(M_{\Theta})_{\Theta>0}$ be as above and let $(M_{\Theta})_{\Theta>0}$, $F$, $f$ satisfy Assumption \ref{ass1}. 
Then there exists $\Theta^{*}>0$ and $\xi \in \mathbb{R}^n$ 
so that the flow (\ref{1}), (\ref{2}) with initial hypersurface $X_{\Theta^{*}}$ 
converges to a translating solution of the flow equation which translates with speed $\xi$, i.e.
\begin{equation}
X(\cdot, t)-\xi t \rightarrow X^{*}
\end{equation} 
in $C^m(S^n)$, $m\in \mathbb{N}$,
for $t \rightarrow \infty$ where $X^{*}$ is the embedding of a smooth, uniformly convex hypersurface.
If $\Theta \in (0, \Theta^{*})$ 
then the solution of (\ref{1}), (\ref{2}) shrinks to
a point in finite time. 
If $\Theta \in (\Theta^{*}, \infty)$ then the diameters of the solutions expand to 
infinity as $t$ goes to infinity. 

(ii) Moreover, in the case (i) of Assumption \ref{ass1} 
the solutions converge to expanding spheres for $\Theta>\Theta_{*}$. 

\end{theorem}

Our paper is organized as follows. 
The rest  of the paper deals with the proof of Theorem \ref{new_main_result}. 
In the remaining part of this section 
we introduce some notations for curvature functions. 
Section \ref{elemantary} states for completeness some elementary facts used throughout the paper.
Section \ref{apriori_estimates} estimates the principal radii of curvature of the flow hypersurfaces 
from below and above as well as their inradii from below. All these bounds depend on the diameter
which itself is known to be bounded in Section \ref{apriori_estimates} only in case (i) of Assumption \ref{ass1}.
Using the estimates from Section \ref{apriori_estimates} we prove Theorem \ref{new_main_result} in the case (i) of Assumption \ref{ass1} in Section \ref{proof_of_main_result}.  
In Section \ref{additional_section} we use several spatially averaged speed estimates. 
In several appearing situations certain properties e.g. symmetry, convexity or a large extension of the surface in at least one direction are assumed. This allows us in a certain kind to translate  estimates of speed and of location  into each other.
 Thereby we prove a diameter bound which holds uniformly in time.
Combining this diameter bound with a priori estimates from the previous sections we then prove Theorem \ref{new_main_result} in the case (ii) of Assumption \ref{ass1}.

In the following we recall some facts about curvature functions from \cite{CP}.
Let $\Gamma \subset \mathbb{R} ^n$ denote a symmetric cone, $(\Omega, \xi^i)$ a coordinate chart in $\mathbb{R}^n$, $(g_{ij})$ a fixed positive definite $T^{0,2}(\Omega)$-tensor with inverse $(g^{ij})$ and $S=\text{Sym}(n)$ the subset of symmetric tensors in $T^{0,2}(\Omega)$.
Let $S_{\Gamma}$ be the set of the tensors $(h_{ij})$ in $S$ with eigenvalues with respect to $(g_{ij})$, i.e. eigenvalues of the $T^{1,1}(\Omega)$-tensor $(g^{ik}h_{kj})$, lying in $\Gamma$. In this setting we always consider a symmetric function $F$ defined in $\Gamma$ also as a function 
$F(\kappa_i)\equiv F(h_{ij}, g_{ij}) \equiv F(\frac{1}{2}(h_{ij}+h_{ji}), g_{ij})$ where the last expression is defined for general $(h_{ij}) \in T^{0,2}(\Omega)$. Using these interpretations we denote partial derivatives by
\begin{equation}
F_i= \frac{\partial F}{\partial \kappa_i}, \quad F_{ij}=\frac{\partial^2F}{\partial \kappa_i \partial \kappa_j}
\end{equation}
and
\begin{equation}
F^{ij}= \frac{\partial}{\partial h_{ij}}F(\frac{1}{2}(h_{ij}+h_{ji}), g_{ij}), \quad 
F^{ij, kl}= \frac{\partial^2}{\partial h_{ij}\partial h_{kl}}F(\frac{1}{2}(h_{ij}+h_{ji}), g_{ij}).
\end{equation}
 For a symmetric function $F$ in $\Gamma_{+}=\{\kappa \in \mathbb{R}^n: \kappa_i>0\}$ we define its inverse $\tilde F$ by
\begin{equation} \label{definition_inverse}
\tilde F(\kappa_i^{-1})= \frac{1}{F(\kappa_i)}, \quad (\kappa_i) \in \Gamma_{+}.
\end{equation}

Here and in the following we sometimes denote partial derivatives by indices separated by a comma for greater clarity of the presentation, sometimes we also omit the comma hereby.
Furthermore, we will often use summation convention. 

In the following we state two assumptions which summarize some technical properties for reference purposes.
\begin{ass} \label{defi_F} 
$\tilde F=\tilde F(\kappa)$, $\kappa=(\kappa_1, ..., \kappa_n)$, is a symmetric and positively homogeneous of degree $d_0$ function $\tilde F\in C^{\infty}(\Gamma_{+})\cap C^0(\bar \Gamma_+)$ with
 \begin{equation}
 \tilde F_{|\partial \Gamma_+}=0,
 \end{equation}
 \begin{equation}
 \tilde  F_i = \frac{\partial \tilde F}{\partial \kappa_i}>0 \quad \text{in }  \Gamma_{+}
 \end{equation}
 and
 \begin{equation} \label{log_concave}
 \tilde F^{ij,kl}\eta_{ij}\eta_{kl} \le \tilde F^{-1}\left(\tilde F^{ij}\eta_{ij}\right)^2-\tilde F^{ik}\tilde h^{jl}\eta_{ij}\eta
_{kl} \quad \forall \eta \in S
\end{equation}
where $(\tilde h^{ij})$ is the inverse of $(h_{ij})$.
\end{ass}

 $\tilde F$ satisfying Assumption \ref{defi_F} is said to be of class $(K)$, see \cite[Definition 2.2.1]{CP}.

First note that for all $k=1, ..., n$  the inverse of the elementary symmetric polynomial $H_k$ satisfies Assumption \ref{defi_F}, see \cite[Lemma 2.2.11]{CP}.

Second note  that Assumption \ref{defi_F} implies that 
\beq
\tilde F_n\kappa_n \ge ... \ge  \tilde F_1 \kappa_1
\eeq
provided $0<\kappa_n \le ...\le \kappa_1$,
see \cite[Lemma 2.2.4]{CP}. Note that $\log \tilde F$ is concave and that 
$\tilde F$ is concave if $d_0=1$, see \cite[Inequality (2.2.4) and Lemma 2.2.14]{CP}.

The following list of assumptions will be used to observe a certain  robustness (from the perspective of the elementary symmetric polynomials) of the mechanism in the rotationally symmetric  and homogeneity degree one case in a further direction.

\begin{ass} \label{defi_F_2} 
$\tilde F$ is a symmetric and positively homogeneous of degree $d_0=1$ function $\tilde F\in C^{\infty}(\Gamma_{+})\cap C^0(\bar \Gamma_+)$ with
 \begin{equation}
 \tilde F_{|\partial \Gamma_+}=0,
 \end{equation}
 \begin{equation}
 \tilde  F_i = \frac{\partial \tilde F}{\partial \kappa_i}>0 \quad \text{in }  \Gamma_{+},
 \end{equation}
 \begin{equation} \label{epsilon}
 \tilde F \text{ is concave}
\end{equation}
and for $0<\kappa_n\le ...\le \kappa_1$, $\kappa=(\kappa_1, ..., \kappa_n)$ holds that
\beq \label{new_condition}
\tilde F_i\kappa_i \ge \tilde F_j \kappa_j\left(\frac{\kappa_n}{\kappa_1}\right)^{\eta}, \quad 1\le j\le i \le n,
\eeq
with some  $0<\eta<1$ which does not depend on $\kappa$.
\end{ass}

We thank Oliver Schn\"urer for the interesting suggestion to study the flow of foliations 
by general speed functions and the helpful hint
to use a method from \cite{4} which allows to conclude convergence of the flow to a translating solution
when the a priori estimates are available.

\section{Elementary facts}\label{elemantary}
For convenience  and completeness we formulate relevant but obvious properties of the foliation $(M_{\Theta})_{\Theta>0}$ in the following two remarks and a lemma.
\begin{rem} \label{ordering}
For each $M_{\Theta}$ we denote the to $M_{\Theta}$ associated
open convex body by $C_{\Theta}$ and have w.l.o.g. (otherwise consider $1/\Theta$)
\begin{equation}
\Theta_1 < \Theta_2 \Rightarrow \overline{C_{\Theta_1}} \subset C_{\Theta_2}.
\end{equation}
Furthermore,  all $C_{\Theta}$ contain 0, otherwise
\begin{equation}
0<d:= \inf\{\Theta>0: \forall_{\tilde \Theta \ge \Theta}\ 0 \in C_{\tilde \Theta} \} < \infty
\end{equation}
where the last inequality is due to the fact that for $p\in \mathbb{R}^{n+1}\setminus\{0\}$ there is $\Theta(p)>0$
so that $p, -p\in C_{\Theta(p)}$ and hence also $0\in C_{\Theta(p)}$.
We conclude  $0 \in M_d$, a contradiction.
\end{rem}

\begin{rem} \label{10}
For all $r>0$ exist $\Theta_1, \Theta_2>0$ so that
\begin{equation}
\quad M_{\Theta_1} \subset B_r(0) \subset C_{\Theta_2}.
\end{equation}
\end{rem}
\begin{proof}
Let $r>0$. Existence of $\Theta_2$ as claimed is clear in view of
\begin{equation}
\overline{B_r(0)} \subset \bigcup_{\Theta>0}C_{\Theta}.
\end{equation}
Assume there are sequences $0<\Theta_k \rightarrow 0$, $x_k \in C_{\Theta_k}$, $x_k\notin B_r(0)$. W.l.o.g. assume $x_k \rightarrow x \in B_r(0)^c$. Let $p=\frac{x}{2}$. There is $\Theta=\Theta(p)>0$ so that $p \in M_{\Theta(p)}$. If $[0,x]$ meets $M_{\Theta(p)}$ tangentially in $p$ then $0 \notin C_{\Theta(p)}$ 
in view of the uniform convexity of $M_{\Theta(p)}$ which is a contradiction.  Hence
there is a neighborhood $U$ of $x$ so that for every $q \in U$ the segment $[0, q]$ meets $M_{\Theta(p)}$
non-tangentially.
This implies 
\begin{equation}
U \subset {(C_{\Theta(p)})}^c\subset (C_{\Theta_k})^c
\end{equation} 
for large $k$. On the other hand 
\begin{equation}
x_k \in U \cap C_{\Theta_k}
\end{equation}
for large $k$, a contradiction.
\end{proof}

\begin{lem} \label{11}
\begin{equation}
\frac{d}{d\Theta}H_{\Theta} >0.
\end{equation}
\end{lem}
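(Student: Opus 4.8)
The inequality is to be read pointwise on $S^n$, so I fix $x\in S^n$ and plan to compute $\tfrac{d}{d\Theta}H_\Theta(x)$ explicitly. Since each leaf $M_\Theta$ is uniformly convex there is a unique point $p_\Theta(x)\in M_\Theta$ whose outer unit normal is $x$, and by the description of the support function recalled at the beginning of Section \ref{apriori_estimates} one has $H_\Theta(x)=x\cdot p_\Theta(x)$. The first step is to note that $(\Theta,x)\mapsto p_\Theta(x)$ is smooth: this point is characterized by $\Theta(p)=\Theta$ together with $\nabla\Theta(p)=|\nabla\Theta(p)|\,x$, i.e. it is the value at $(\Theta,x)$ of the inverse of the map $p\mapsto\bigl(\Theta(p),\nabla\Theta(p)/|\nabla\Theta(p)|\bigr)$, which is a diffeomorphism of $\mathbb{R}^{n+1}\setminus\{0\}$ onto $(0,\infty)\times S^n$ because $\Theta$ is smooth with non-vanishing gradient and, by uniform convexity, the Gauss map of each leaf is a diffeomorphism (one may also argue locally with the implicit function theorem). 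In particular $\Theta\mapsto H_\Theta(x)$ is differentiable with $\tfrac{d}{d\Theta}H_\Theta(x)=x\cdot\tfrac{d}{d\Theta}p_\Theta(x)$.

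Next I would identify the normal direction with the gradient direction. Since $M_\Theta$ is the $\Theta$-level set of the function $\Theta$, the vector $\nabla\Theta(p)$ is orthogonal to $M_{\Theta(p)}$ at $p$; and by the increasing ordering fixed in Remark \ref{ordering} (together with $0\in C_\Theta$ for all $\Theta$) the function $\Theta$ grows along the outward normal at each point, so $\nabla\Theta(p)$ is a positive multiple of the outer unit normal. At $p=p_\Theta(x)$ this reads $\nabla\Theta(p_\Theta(x))=|\nabla\Theta(p_\Theta(x))|\,x$. Differentiating the identity $\Theta(p_\Theta(x))\equiv\Theta$ in $\Theta$ and using the chain rule then gives $\nabla\Theta(p_\Theta(x))\cdot\tfrac{d}{d\Theta}p_\Theta(x)=1$, that is, $|\nabla\Theta(p_\Theta(x))|\,\bigl(x\cdot\tfrac{d}{d\Theta}p_\Theta(x)\bigr)=1$.

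Combining the two displays yields
\[
\frac{d}{d\Theta}H_\Theta(x)=x\cdot\frac{d}{d\Theta}p_\Theta(x)=\frac{1}{|\nabla\Theta(p_\Theta(x))|}>0,
\]
because $\nabla\Theta$ never vanishes; as $x\in S^n$ was arbitrary this is the claim, and by compactness of $S^n$ one moreover reads off $\min_{S^n}\tfrac{d}{d\Theta}H_\Theta=1/\max_{x\in S^n}|\nabla\Theta(p_\Theta(x))|>0$, which is the form used in (\ref{ungleichung}). I expect the only genuinely technical point to be the smoothness (or even just $\Theta$-differentiability) of $(\Theta,x)\mapsto p_\Theta(x)$ — equivalently, the regular dependence of the leaves on $\Theta$ — which is exactly where the hypotheses "$\Theta$ a smooth function with non-vanishing gradient" and "uniform convexity of the leaves" are used; once this is available the computation above is elementary. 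A softer alternative shows only half of the statement: strict nestedness $\overline{C_{\Theta_1}}\subset C_{\Theta_2}$ forces $H_{\Theta_1}(x)<H_{\Theta_2}(x)$ and hence $\tfrac{d}{d\Theta}H_\Theta\ge 0$, but upgrading this to a strict inequality for the derivative itself seems to require precisely the explicit formula derived above.
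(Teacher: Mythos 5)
Your proof is correct but takes a genuinely different route from the paper. The paper argues by comparison at the level of difference quotients: from $D\Theta\neq 0$ and compactness it first derives $\dist(M_{\Theta_1},M_{\Theta_2})\ge c_0(\Theta_2-\Theta_1)$ with $c_0=c_0(\Theta_1)>0$; it then takes the contact point $y_x\in M_{\Theta_1}$ with $H_{\Theta_1}(x)=x\cdot y_x$, intersects the ray from the origin through $y_x$ with $M_{\Theta_2}$ to obtain a competitor $y\in M_{\Theta_2}$, and uses the angle bound $c_1=\inf_{x}x\cdot\frac{y_x}{|y_x|}>0$ (which is where $0\in C_\Theta$ enters) to get $H_{\Theta_2}(x)\ge x\cdot y\ge H_{\Theta_1}(x)+c_0c_1(\Theta_2-\Theta_1)$, hence a lower bound on the derivative. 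You instead differentiate $H_\Theta(x)=x\cdot p_\Theta(x)$ directly, use the level-set characterization $\nabla\Theta(p_\Theta(x))=|\nabla\Theta(p_\Theta(x))|\,x$ and the identity $\Theta(p_\Theta(x))\equiv\Theta$ to arrive at the closed-form expression $\frac{d}{d\Theta}H_\Theta(x)=1/|\nabla\Theta(p_\Theta(x))|$. What the paper buys is that it never needs the smooth dependence of the contact point $p_\Theta(x)$ on $(\Theta,x)$; it works with support-function inequalities only, at the price of a somewhat opaque constant $c_0c_1$. What your argument buys is an explicit formula with a transparent constant, from which the uniform bound $\min_{S^n}\frac{d}{d\Theta}H_\Theta>0$ used in the display \eqref{ungleichung} drops out immediately; the cost is the extra step (which you correctly flag, and which does hold by the inverse/implicit function theorem given smoothness of $\Theta$, $D\Theta\neq 0$, and uniform convexity of the leaves) of justifying that $(\Theta,x)\mapsto p_\Theta(x)$ is $C^1$. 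Note also that the paper's proof, as written, only bounds the forward difference quotient; it too implicitly relies on $\Theta\mapsto H_\Theta(x)$ being differentiable, so your explicit computation is not actually a more demanding hypothesis.
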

\begin{proof}
Let $0<\Theta_1<\Theta_2<\infty$, $x \in S^n$.  In view of $D\Theta \neq 0$ there is $c_0=c_0(\Theta_1)>0$ so that
\begin{equation}
\dist(M_{\Theta_1}, M_{\Theta_2})\ge c_0 (\Theta_2-\Theta_1).
\end{equation}
For $x \in S^n$ let $y_x \in M_{\Theta_1}$ be so that
\begin{equation}
 H_{\Theta_1}(x) = xy_x >0
\end{equation}
and hence also 
\begin{equation}
 c_1 = \inf_{x \in S^n}x\frac{y_x}{|y_x|} >0.
\end{equation}
Let $y$ be the intersection of the ray starting in 0 through $y_x$ with $M_{\Theta_2}$ then 
\begin{equation}
x\cdot y \ge x \cdot y_x + c_0c_1(\Theta_2-\Theta_1)
\end{equation}
hence
\begin{equation}
\begin{aligned}
H_{\Theta_2}(x) \ge& x \cdot y_x + c_0c_1(\Theta_2-\Theta_1) \\
=& H_{\Theta_1}(x) + c_0c_1(\Theta_2-\Theta_1) 
\end{aligned}
\end{equation}
which implies 
\begin{equation}
{{\left(\frac{d}{d\Theta}{H_{\Theta}}(x)\right)}}_{|\Theta=\Theta_1} >0.
\end{equation}
\end{proof}

For completeness we state  the following well-known avoidance property (in a rather less general form) of the flows which we use several times with or without mentioning it each time, compare also with the version in Lemma \ref{global_maximum_principle}. 

\begin{lem} \label{maximum_principle}
Let $R, T>0$ and denote the open ball of radius $R$ in $\mathbb{R}^n$ around the origin by $B_R(0)$.
Let 
\beq
u_i:[0, T] \times B_R(0) \rightarrow \mathbb{R},
\eeq
$i=1,2$,
be smooth functions where we exclude for convenience
 borderline case graphs by assuming that 
\beq
|Du_i| \le c
\eeq
in $[0, T] \times B_R(0)$ 
for some positive constant $c$ and $i=1,2$.
We assume that for $t \in [0, T]$ both graphs
\beq
M_i(t)=\graph u_i(t, \cdot) =\{(u_i(t,x), x):x \in B_R(0)\} \subset \mathbb{R}^{n+1},
\eeq
$i=1,2$,
are  convex, have positive $F$-curvature and
evolve according to \eqref{1} with $F$ as in Assumption \ref{defi_F_2} where we assume that the outer unit normal
vector field $\nu^i=(\nu^i_1, ..., \nu^i_{n+1})\in \mathbb{R}^{n+1}$ of $M_i(t)$, $i=1,2$, is pointing upwards , i.e. $\nu^i_1>0$.
Let $0<t_0<T$, $x_0\in B_R(0)$, be such that 
\bea
u_1(t,x)\le&u_2(t,x) \quad \forall t \in [0, t_0) \quad \forall x \in B_R(0) \\
u_1(t_0, x_0) =& u_2(t_0, x_0).
\eea
Then there are $\Delta t, \Delta x>0$ such that
\beq
u_1(t,x)=u_2(t,x) \quad \forall (t,x)  \in [t_0-\Delta t,  t_0]  \times B_{\Delta x}(x_0).
\eeq
\end{lem}
\begin{proof}
See e.g. the lecture notes \cite{Schnuerer} in the case of mean curvature flow. Our more general case is an adaption of that argument.
\end{proof}

\section{A priori estimates for general speeds} \label{apriori_estimates}

We recall some facts about the support function of  a closed and convex hypersurface $M$ in $\mathbb{R}^{n+1}$ from \cite{1}
and follow the presentation therein closely, see also \cite{3} and \cite{5}.
The support function $H$ of $M$ is defined on $S^n$ by 
\begin{equation}
H(x) = \sup_{y\in M}x\cdot y
\end{equation}
where the dot denotes the inner product in $\mathbb{R}^{n+1}$. 
It is sometimes convenient to work with the 
homogeneous degree one extension of $H$ in $\mathbb{R}^{n+1}$ which we also denote by $H$.
$H$ is convex in $\mathbb{R}^{n+1}$ and we have
\begin{equation} \label{gradient_estimate}
\sup_{S^n}|\nabla H| \le \sup_{S^n}|H|
\end{equation}
since $H$ is the supremum of linear functions. If $M$ is strictly convex, i.e. for each $x$ in $S^n$ there is a unique point $p=p(x)$ on $M$ whose unit outer normal is $x$, $H$ is differentiable at $x$ and
\begin{equation}
p_{\alpha} = \frac{\partial H}{\partial x_{\alpha}}, \quad \alpha =1, ..., n+1.
\end{equation}
Furthermore, given an orthonormal frame fields $e_1, ..., e_n$ on $S^n$ and denoting covariant differentiation with respect to $e_{i}$ by $\nabla_{i}$ the eigenvalues of $(\nabla_{i} \nabla_{j}H+H\delta_{ij})_{i, j=1, ..., n}$, are the principal radii of curvature at $p(x)$. When $H$ is viewed as a homogeneous function over $\mathbb{R}^{n+1}$, the principal radii of curvature of $M$ are also equal to the non-zero eigenvalues of the Hessian 
\begin{equation}
\left(\frac{\partial^2 H}{\partial x_{\alpha} \partial x_{\beta}}\right)_{\alpha, \beta=1, ..., n+1}
\end{equation}
on $S^n$. 

We begin with a reformulation of Equation (\ref{1}) locally in Euclidean space, cf. Equation (\ref{1_6_}). 
Let $H(\cdot, t): S^n \rightarrow \mathbb{R}$ be the support function of $M(t)$ where we denote its homogeneous degree one extension to $\mathbb{R}^{n+1}$ again by $H(\cdot, t)$ and let $p(\cdot)=p(\cdot, t)$ denote the inverse of the Gauss map $M(t)\rightarrow S^n$.
Using
\begin{equation}
 \frac{H}{\partial t}(x,t)= x \cdot \frac{\partial X}{\partial t}(p(x), t), \quad x \in S^n,
\end{equation}
we rewrite
problem (\ref{1}) as the following initial value problem for $H$
\begin{equation} \label{flow_H}
 \begin{aligned}
  \frac{\partial H}{\partial t}  =& \log \frac{f}{F}=\log \tilde Ff \\
  H(x,0) =& H_{\Theta }(x)
 \end{aligned}
\end{equation}
where $H_{\Theta}$ is the support function for $M_{\Theta}$ and $\tilde F$ a function of the
principal radii $r_i=\kappa_i^{-1}$ defined by 
\begin{equation} \label{F_explizit_in_terms_of_eigenvalue}
F=F(\kappa_i)=\tilde F(\kappa_i^{-1})^{-1}=\tilde F(r_i)^{-1}.
\end{equation}
We set $u(y,t)=H(y, -1, t)$, $y \in \mathbb{R}^n$. Then $u(\cdot, t)$ is convex and the principal radii $r_i$ of $X(\cdot, t)$ in $p(x,t)$, $x \in S^n$, are given as nonzero zeros of the
 equation
 \begin{equation} \label{matrix}
 \det B =0
 \end{equation}
 in the variable $r$
 where $B=(B_{\alpha \beta})_{\alpha, \beta =0, ..., n}$ with
\begin{equation} \label{defi_B}
(B_{\alpha \beta}) = \left(
 \begin{matrix}
  -\frac{\lambda^2}{r} & y_1 & ... & y_n \\
  y_1 & \lambda u_{11}-r & ... & \lambda u_{1n} \\
  ... \\
  y_n & \lambda u_{n1} & ... & \lambda u_{nn}-r
 \end{matrix}
 \right), 
\end{equation}
$\lambda = (1+y_1^2+ ... + y_n^2)^{\frac{1}{2}}$ and $x$ and $y$ are related by 
\begin{equation}
x=(y,-1)/\sqrt{1+|y|^2},
\end{equation}
cf. \cite[page 16]{3}, and note that we have rewritten the equation therein slightly. 
Furthermore, we have
\begin{equation}
 \frac{\partial u}{\partial t}(y,t) = \sqrt{1+|y|^2}\frac{\partial H}{\partial t}(x,t).
\end{equation}
Extending $f$ to be a homogeneous function of degree 0 in $\mathbb{R}^{n+1}$ we obtain the local representation of (\ref{1}) in terms of $u$
\begin{equation} \label{1_6}
 \frac{\partial u}{\partial t} = \sqrt{1+|y|^2}\log \tilde F + l(y), \quad y\in \mathbb{R}^n,
\end{equation}
where
\begin{equation}
 l(y) = \sqrt{1+|y|^2}\log f(y, -1)
\end{equation}
and $\tilde F$ is evaluated at the zeros $r_i$ of Equation (\ref{matrix}). 
For technical reasons we rewrite this equation slightly by using the homogeneity of $\tilde F$ 
\begin{equation} \label{1_6_}
 \frac{\partial u}{\partial t} = \sqrt{1+|y|^2}\log \tilde F(\lambda^{-3}r_i) + g(y), \quad y\in \mathbb{R}^n,
\end{equation}
where
\begin{equation}
 g(y) = l(y) + 3d_0  \lambda \log \lambda.
\end{equation}

From the maximum principle one gets an analogous comparison principle as \cite[Lemma 2.1]{1} which implies uniqueness of a solution of (\ref{flow_H}). Compare this also with the more special and local version in Lemma \ref{maximum_principle}.

\begin{lem} \label{global_maximum_principle}
 For $i=1,2$ let $f_i$ be two positive $C^2$-functions on $S^n$
 and $H_i$ $C^{2,1}$-solutions of 
 \begin{equation}
  \frac{\partial H_i}{\partial t} = \log \tilde Ff_i.
 \end{equation}
If $H_1(x,0)\le H_2(x,0)$ and $f_1(x)\le f_2(x)$ on $S^n$ then $H_1 \le H_2$
 for all $t>0$ and $H_1<H_2$ unless $H_1\equiv H_2$.
\end{lem}

In the following we will always assume that $H\in C^{\infty}(S^n\times [0, T])$ is a solution
of (\ref{flow_H}). We denote the outer and inner radii of the hypersurface
$X(\cdot, t)$ determined by $H(\cdot,t)$ by  $R(t)$ and $r(t)$, respectively, and set
\begin{equation}
 R_0 = \sup\{R(t): t \in [0 ,T]\}
\end{equation}
and
\begin{equation}
 r_0 =\inf\{r(t):t\in [0, T]\}.
\end{equation}
The goal of the present section is to estimate the principal radii of curvatures of $X(\cdot, t)$ from below and above in terms of 
$r_0$, $R_0$ and initial data.

Lemma \ref{lemma_2.2}, Lemma \ref{lemma_2_3}, Lemma \ref{lemma_1}, Corollary \ref{korollar} and Lemma \ref{lemma_2} which will follow below 
are concerning their formulation the same as the corresponding ones in \cite{1} but refer here to a different flow.
We state them for the convenience of the reader and present proofs when differences to \cite{1}
appear. 
We begin with two lemmas needed in the following.
\begin{lem} \label{lemma_2.2}
 Let $r$ and $R$ be the inner and outer radii of a uniformly convex hypersurface $X$ respectively.
 Then there exists a dimensional constant $C$ such that
 \begin{equation}
  \frac{R^2}{r}\le C\sup \{R(x, \xi): x, \xi \in S^n\},
 \end{equation}
 where $R(x, \xi)$ is the principal radius of curvature of $X$ at the point with normal $x$ and 
 along the direction  $\xi$.
\end{lem}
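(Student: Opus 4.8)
The plan is to establish the inequality $\frac{R^2}{r} \le C \sup\{R(x,\xi): x,\xi \in S^n\}$ by a geometric argument comparing the hypersurface with inscribed and circumscribed balls and then integrating the second fundamental form (or rather the principal radii) along a suitable geodesic of $S^n$. First I would fix the convex body $C$ bounded by $X$, choose an inscribed ball $B_r(p) \subset C$ of radius $r$ and a circumscribed ball $B_R(q) \supset C$ of radius $R$, and reduce — by a translation — to the case $p = 0$; note that then $q$ lies at distance at most $R - r \le R$ from the origin, so $C$ is squeezed between $B_r(0)$ and $B_{2R}(0)$ say.

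The key step is to exploit the classical formula expressing the support function $H$ of $X$ in terms of the principal radii: in an orthonormal frame on $S^n$ the matrix $(\nabla_i\nabla_j H + H\delta_{ij})$ has eigenvalues the principal radii, so in particular for any fixed unit tangent direction the ``directional'' second-order quantity $\nabla_{\xi\xi}H + H$ is a convex combination (in fact lies between the extreme eigenvalues) of the principal radii, hence is bounded above by $\sup\{R(x,\xi)\}$. Since $r \le H \le R$ pointwise (the support function of a body squeezed between two balls is squeezed between their support functions, after the translation above $H$ ranges in roughly $[r, 2R]$), one gets along any great circle $\gamma(s)$ on $S^n$ a one-variable function $h(s) = H(\gamma(s))$ satisfying $h'' + h \le M := \sup\{R(x,\xi)\}$ with $r \le h \le 2R$. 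Integrating this ODE inequality over a half-period of length $\pi$, or more robustly estimating: if $h$ stays above $r$ on an interval of length $\pi$ while $h''+h\le M$, then comparison with the solution of $w''+w=M$, $w = M + (\text{something})\cos$, forces $M \gtrsim$ the spread of $h$, which is comparable to $R$; combined with $h \ge r$ on the other side this yields $R^2/r \lesssim M$. The cleanest route is probably: pick the direction in which the outer radius is attained, so that $H$ is close to $R$ at some point $x_0$ and close to $r$ at the antipode or nearby (since the inscribed ball is centered near where $H$ is small), run the great circle through these, and use the integral identity $\int_0^\pi (h''(s)+h(s))\sin s\, ds = h(0) + h(\pi)$ together with $h''+h \le M$ and $\sin s \ge 0$ to get $h(0)+h(\pi) \le 2M$, which is too weak alone but when iterated over circles through the ``thin'' direction gives the quadratic loss.

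The main obstacle I expect is getting the \emph{quadratic} dependence $R^2/r$ rather than merely $R$: a single ODE comparison along one great circle only produces a linear bound $R \le CM$, and one genuinely needs to use convexity together with the fact that the inscribed ball is small to lose a factor $R/r$. The trick — which is standard in this Minkowski-problem circle of ideas, cf. \cite{1,3,5} — is that if the body contains $B_r(0)$ but has width $\sim R$ in some direction, then in the directions transverse to the long axis the support function is as small as $\sim r$, forcing the body to be very ``thin and long''; writing out the support function in an affine chart and using that the graph function $u$ (or $H$ on $S^n$) is convex, the Hessian bound $D^2u + \text{(lower order)} \le M\cdot(\text{metric factor})$ along a path from the fat region to the thin region, where the path has length $\sim R$ in the long direction but the endpoint values of $H$ differ by $\sim R$ while the intermediate values are $\gtrsim r$, produces via the mean value / convexity argument the estimate $R^2 \lesssim Mr$, i.e. the claim. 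Once this geometric core is in place, the remaining details — choosing the frame, controlling the metric factors $\lambda$ from \eqref{defi_B}, and absorbing dimensional constants — are routine, and I would relegate them to a short computation referencing the analogous step in \cite{1}.
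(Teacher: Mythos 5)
The paper gives no proof here at all --- it simply cites Chou--Wang \cite[Lemma 2.2]{1} --- so I am assessing your independent sketch, and I do not think it closes the gap you yourself flag. You correctly observe that a single ODE comparison $h''+h\le M$ along a great circle, together with $r\le h\le 2R$ and $\max h\gtrsim R$, yields only the \emph{linear} bound $M\gtrsim R$. But the fix you then propose does not work. You apply the identity $\int_0^\pi(h''+h)\sin s\,ds=h(0)+h(\pi)$ to the pair with $h(0)\sim R$, $h(\pi)\sim r$, i.e.\ $h(0)+h(\pi)\sim R$, which reproduces exactly $M\gtrsim R$; the phrases ``iterated over circles through the thin direction'' and ``a short computation referencing the analogous step in [1]'' are placeholders, not an argument. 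Your sketch also conflates the direction where the support function is small (the boundary is merely close to the origin there) with the direction where the body is genuinely \emph{thin}, i.e.\ where the width $H(x)+H(-x)$ is small --- it is the latter that matters. Finally, the Hessian estimate you write, $D^2u+(\text{lower order})\le M\cdot(\text{metric factor})$, is an \emph{upper} bound on the Hessian of the support function chart; a mean-value/convexity argument with only an upper bound on $D^2u$ has no mechanism to introduce a factor $r$ in the denominator, so the asserted $R^2\lesssim Mr$ is stated, not derived.

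The missing input is the positivity $\rho:=h''+h>0$ together with an application of your integral identity to the \emph{minimal-width} pair. By a Steinhagen-type inequality the minimal width $w$ of the body, realized in some direction $\eta$, satisfies $w\le c_n r$. Project the body onto the $2$-plane spanned by $\eta$ and a diameter direction $\zeta$; the projection is a plane convex body whose support function $h$ satisfies $\rho=h''+h\in(0,M]$, whose width $h(\eta)+h(-\eta)=\int_0^\pi\rho(\tau)\sin\tau\,d\tau\le c_n r$, and whose diameter is $\ge R$. The width integral shows the positive measure $\rho\,d\tau$ is concentrated where $|\sin\tau|$ is small: splitting $[0,2\pi]$ at $|\sin\tau|\le\delta$, the mass on the small set is $\le 4\delta M$ (by $\rho\le M$) and the mass on its complement is $\le 2c_n r/\delta$ (by the width integral), so the total mass, which bounds the diameter from above, is $\lesssim\delta M+r/\delta$; optimizing $\delta\sim\sqrt{r/M}$ gives $R\lesssim\sqrt{rM}$, i.e.\ $R^2/r\lesssim M$. (Equivalently and perhaps more transparently: write the cap of $X$ as a graph $v$ over the hyperplane orthogonal to $\eta$; the curvature bound gives the \emph{lower} Hessian estimate $D^2v\gtrsim M^{-1}I$, and $0\le v\le c_n r$ over a domain of diameter $\gtrsim R$ forces $R^2\lesssim Mr$ by one-dimensional convexity along a diameter --- note the opposite sign to the Hessian bound in your sketch.) Either route supplies the quantitative step your proposal leaves open.
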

\begin{proof}
 See \cite[Lemma 2.2]{1}.
\end{proof}
\begin{lem} \label{lemma_2_3}
Let $a(t), b(t) \in C^1([0, T])$ and $a(t)<b(t)$ for all $t$. Then there exists 
 $h(t)\in C^{0,1}([0, T])$ such that
 
 i) $a(t)-2M \le h(t)\le b(t)+2M$,
 
 ii) $\sup \{ \frac{|h(t_1)-h(t_2)|} {|t_1-t_2|} : t_1, t_2 \in [0, T] \} \le 2 \max \{\sup_t b'(t)
 , \sup_t(-a'(t))\}$,\\
 where $M=\sup_t(b(t)-a(t))$.
\end{lem}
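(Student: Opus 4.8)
The plan is to set $L:=\max\{\sup_t b'(t),\,\sup_t(-a'(t))\}$ and, assuming $L\ge 0$ (if both suprema are $\le 0$ then $b$ is non-increasing, $a$ is non-decreasing, the band $[a(t),b(t)]$ stays inside $[a(0),b(0)]$, and one simply takes $h\equiv\frac12(a(0)+b(0))$), to produce $h$ as a one-sided sup-convolution of the midpoint $m(s):=\frac12(a(s)+b(s))$ against the cone $-L|t-s|$:
\[
 h(t):=\max_{s\in[0,T]}\Bigl(\tfrac12\bigl(a(s)+b(s)\bigr)-L\,|t-s|\Bigr).
\]
This is the largest function lying below $m$ whose one-sided difference quotients are bounded by $L$; the construction is tailored so that (ii) is automatic, and I expect only (i) to require work.

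For (ii) I would note that, for each fixed $s$, the map $t\mapsto m(s)-L|t-s|$ is $L$-Lipschitz on $[0,T]$ (since $\bigl||t-s|-|t'-s|\bigr|\le|t-t'|$ and $L\ge 0$), that the supremum defining $h$ is finite because $m$ is continuous on the compact interval $[0,T]$, and that a pointwise supremum of $L$-Lipschitz functions is again $L$-Lipschitz. Hence $h\in C^{0,1}([0,T])$ with Lipschitz seminorm $\le L\le 2L$, which is in fact sharper than demanded.

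For (i) I would first record the two elementary consequences of the definition of $L$, obtained by integrating $b'\le L$ and $-a'\le L$: for all $0\le s\le t\le T$ one has $b(t)\le b(s)+L(t-s)$ and $a(t)\ge a(s)-L(t-s)$. Choosing $s=t$ in the definition of $h$ gives the lower bound $h(t)\ge m(t)\ge a(t)\ge a(t)-2M$ at once. For the upper bound I would pick $s^{*}$ realizing the maximum and use $m\le b$ to get $h(t)\le b(s^{*})-L|t-s^{*}|$; if $s^{*}\ge t$ the first recorded bound gives $h(t)\le b(t)$, while if $s^{*}<t$ I would chain $b(s^{*})\le a(s^{*})+M$ (from $b-a\le M$) with $a(s^{*})\le a(t)+L(t-s^{*})\le b(t)+L(t-s^{*})$ (the second recorded bound) to obtain $h(t)\le b(t)+M\le b(t)+2M$.

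The one delicate point I anticipate is precisely this last case: when the maximizer $s^{*}$ lies to the past of $t$, the value $b(s^{*})$ cannot be controlled by $b(t)$ directly, since $b$ may have dropped steeply on $[s^{*},t]$; the device that rescues the estimate is to route it through $a$, using that the band has width at most $M$ and that $a$ cannot decrease faster than rate $L$, which manufactures exactly the term $L(t-s^{*})$ that cancels the cone penalty. Everything else — finiteness of the supremum, continuity of $h$, the Lipschitz bound — is routine, and the slack $2M$ in (i) is generous (the argument only costs $M$), which is convenient for the perturbative way the lemma is later applied.
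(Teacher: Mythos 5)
The paper does not give its own proof of this lemma; it simply defers to \cite[Lemma 2.3]{1}, so there is no in-house argument to compare against, and I evaluate the proposal on its own merits. Your sup-convolution construction $h(t)=\max_{s\in[0,T]}\bigl(m(s)-L|t-s|\bigr)$ with $m=\tfrac12(a+b)$ and $L=\max\{\sup_t b',\sup_t(-a')\}$ is a clean, self-contained proof for $L\ge 0$. The Lipschitz bound $\le L\le 2L$ is immediate because $h$ is a pointwise supremum of $L$-Lipschitz cones that is finite (by compactness of $[0,T]$ and continuity of $m$). The lower bound $h(t)\ge m(t)\ge a(t)$ is the choice $s=t$, and the upper bound is handled correctly by locating a maximizer $s^{*}$: for $s^{*}\ge t$ you push $b(s^{*})$ back to $b(t)$ via $b'\le L$, and for $s^{*}<t$ you route through $a$ using $b-a\le M$ together with $a'\ge -L$, the cone penalty $-L(t-s^{*})$ exactly cancelling the accumulated drift. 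In fact your argument yields the sharper pinching $a(t)\le h(t)\le b(t)+M$, comfortably inside the stated $\pm 2M$ margins.

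One small caveat about the case you set aside. If $L<0$, the right-hand side of (ii) is negative while the left-hand side is a nonnegative Lipschitz seminorm, so no choice of $h$ — including your constant fallback $h\equiv\tfrac12(a(0)+b(0))$ — can satisfy (ii) as written. This is a defect in the lemma's statement rather than in your proof: in the paper's application the bound is used as $2\sup H_t$ on the right of a nonnegative quantity, so the supremum is tacitly taken nonnegative. You should simply assume $L\ge 0$ at the outset (equivalently, interpret the right-hand side of (ii) as its positive part), which is exactly the regime your main construction covers; with that adjustment the argument is complete and correct.
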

\begin{proof}
 See \cite[Lemma 2.3]{1}
\end{proof}

In the following lemma we prove an upper bound for the principal radii of curvature.

\begin{lem} \label{lemma_1}
We assume part (i) of Assumption \ref{ass1}.
 For any $\gamma \in (1,2]$ there exists a constant $c_{\gamma}$ which may depend on initial data such that
 \begin{equation}
 \begin{aligned}
  \sup  \{H_{\xi\xi}(x,t): (x,t) \in S^n \times  [0, T], \xi \in T_xS^n, |\xi|=1\} 
  \le c_{\gamma}(1+D^{\gamma}),
  \end{aligned}
 \end{equation}
 where $D=\sup\{d(t): t\in [0, T]\}$ and $d(t)$ is the diameter of $X(\cdot, t)$. 
\end{lem}
\begin{proof} 
We adapt the proof of \cite[Lemma 2.4]{1} by including the case of a more general speed function.
Therefore we develop a method 
based on the novelty that we identify principal radii as eigenvalues of certain lower dimensional matrices building on the representation \eqref{matrix}. 
This allows us in a very convenient way to closely follow, in the case of the general speed function (instead of the Gauss curvature), the argumentation of the proof of \cite[Lemma 2.4]{1}. In this proof,  a convenient and special parametrization is used which can be traced back to older works. In addition to the formal analogy, the mathematical analogy holds true since we use Assumption \ref{ass1} (i). We will revisit this framework in the proof of Lemma \ref{new_lemma_1} where we have only part (ii) instead of part  (i) of Assumption \ref{ass1} available yielding a crucially weaker conclusion.

In the following we precisely repeat the formulas from \cite[Lemma 2.4]{1} for convenience.
 Applying Lemma \ref{lemma_2_3} to the functions $-H(-e_i, t)$ and $H(e_i, t)$
 where $\pm e_i$ are the intersection points of $S^n$ 
 with the $x_i$-axis, $i=1, ..., n+1$, we obtain
 $p_i(t)$ so that
 \begin{equation}
  -H(-e_i, t)-2D \le p_i(t)\le H(e_i, t)+2D
 \end{equation}
 and
 \begin{equation}
  \begin{aligned}
  \sup\left\{\frac{|p_i(t_1)-p_i(t_2)|}{|t_1-t_2|}:t_1, t_2 \in [0, T]\right\} \\
 \le 2 \sup\{H_t(x,t):(x,t)\in S^n\times [0,T]\}.
 \end{aligned} 
 \end{equation}
We have
 \begin{equation} \label{2.2}
  \left|H(x,t)-\sum_{i=1}^{n+1}p_i(t)x_i\right| \le cD \quad \text{for } (x,t)\in S^n\times [0,T],
 \end{equation}
 and by (1.1)
 \begin{equation} \label{2.3}
 \sum_{i=1}^{n+1}|H_i(x,t)-p_i|^2 \le cD^2.
 \end{equation}
 Let 
 \begin{equation}
  \Phi(x,t) = H_{\xi\xi}(x,t)+\left[1+\sum_{i=1}^{n+1}|H_i(x,t)-p_i(t)|^2\right]^{\frac{\gamma}{2}}
 \end{equation}
 where $\gamma \in (1,2]$. Suppose that the supremum 
 \begin{equation}
  \sup\{\Phi(x,t):(x,t)\in S^n\times [0, T], \xi \text{ tangential to } S^n, |\xi|=1 \}
 \end{equation}
 is attained at the south pole $x= (0, ..., 0, -1)$ at $t=\bar t>0$ and in the direction $\xi=e_i$.
 For any $x$ on the south hemisphere, let
 \begin{equation}
  \xi(x) = \left(\sqrt{1-x_1^2}, -\frac{x_1x_2}{\sqrt{1-x_1^2}}, ..., -\frac{x_1x_{n+1}}{1-x_1^2}\right).
 \end{equation}

We perform the calculations in an Euclidean setting which can be achieved by considering  the restriction  $u$ of $H$ on $x_{n+1}=-1$. 
Due to the homogeneity of $H$ we obtain
\begin{equation}
 \begin{aligned}
  \sum_{i=1}^{n+1}(H_i&-p_i)^2(x,t) \\
  & = \sum_{i=1}^n(u_i(y,t)-p_i(t))^2+\left|u(y,t)+p_{n+1}-\sum_{i=1}^ny_iu_i(y,t)\right|^2
 \end{aligned}
\end{equation}
and
\begin{equation}
 H_{\xi\xi}(x,t) = u_{11}(y,t)\frac{(1+y_1^2+ ... + y_n^2)^{\frac{3}{2}}}{1+y_2^2+ ... + y_n^2},
\end{equation}
where $y=-(x_1, ..., x_n)/x_{n+1}$ in $\mathbb{R}^n$. The function 
\begin{equation}
\begin{aligned}
\varphi(y,t)=&u_{11}\frac{(1+y_1^2+ ...+y_n^2)^{\frac{3}{2}}}{1+y_2^2+ ...+ y_n^2}\\
& +\left[1+\sum (u_i-p_i)^2+|u+p_{n+1}-\sum y_i u_i|^2\right]^{\frac{\gamma}{2}}
\end{aligned}
\end{equation}
attains its maximum at $(y,t)=(0, \bar t)$ where we may w.l.o.g. assume that
the Hessian of $u$ at $(0, \bar t)$ is diagonal. Hence at $(0, \bar t)$
we have for each $k$,
\begin{equation} \label{phi_inequality}
 \begin{aligned}
  0 \le \varphi_t =& u_{11t}+\gamma [(u_i-p_i)(u_{it}-p_{i,t})+(u+p_{n+1})(u_t+p_{n+1,t})]
  Q^{\frac{\gamma-2}{2}}, \\
  0 =& \varphi_k = u_{11k}+\gamma(u_i-p_i)u_{ik}Q^{\frac{\gamma-2}{2}}
 \end{aligned}
\end{equation}
and
\begin{equation} \label{phi_inequality2}
 \begin{aligned}
  0 \ge& \varphi_{kk} = u_{kk11}+\tau_k u_{11}+\gamma[u_{kk}^2+(u_i-p_i)u_{ikk}-(u+p_{n+1})u_{kk}]
  Q^{\frac{\gamma-2}{2}}\\
  & +\gamma(\gamma-2)(u_i-p_i)^2u_{ik}^2Q^{\frac{\gamma-4}{2}},
 \end{aligned}
\end{equation}
where $Q=1+\sum (u_i-p_i)^2+(u+p_{n+1})^2$, $\tau_k=1$ if $k>1$, $\tau_1=3$ and 
$p_{i,t}=\frac{dp_i}{dt}$. 

Here we deviate formally from following \cite[Lemma 2.4]{1}.
Instead we first explain how to express the principal radii of curvature as eigenvalues of matrices in $Sym(n)$ in certain cases which will in the sequel be useful for the expression of derivatives of $u$. 
Note that the formal motivation is that it is well-known that any derivatives of $\tilde F=\tilde F(r_i)$, cf. \eqref{F_explizit_in_terms_of_eigenvalue}, with respect to the variables $y_i$, $i=1, ..., n$,
can by chain rule be very conveniently expressed in terms of the derivative of the curvature function $\tilde F$ (considered as being defined in $Sym (n)$ via the vector of eigenvalues) 
and derivatives with respect to the $y_i$ of the entries of a symmetric matrix valued function -- if there is such -- with eigenvalues given by $r_i$. 
In a straightforward way 
such a matrix valued function and thereby such kind of decomposition does not seem to be available here. But since we know that the Hessian of $u$ in $(0, \bar t)$
is diagonal we only need  first order and pure second order (not the mixed ones) partial derivatives of $u$ in the following expressions.
Since the first order and the pure second order partial derivative is  the first order and the second order derivative, respectively,  of the same function when being restricted 
to a coordinate function (while fixing all the other coordinates), it suffices to realize the function \eqref{F_explizit_in_terms_of_eigenvalue} for each partial derivative by a different decomposition. These decompositions consist in each case of an individually chosen matrix valued inner function and the same outer function given by $\tilde F$ 
(considered as being defined in $Sym (n)$ via the vector of eigenvalues).

In order to construct the matrix valued inner function which we will use to express partial derivatives with respect to $y_1$ we proceed as follows. Since the Hessian $(u_{ij})$ is diagonal in $(0, \bar t)=(y, \bar t)$ 
also $B$
is diagonal. 
We fix $y_i=0$, $i=2, ..., n$, and vary $y_1$ for a moment. In this case we rewrite Equation (\ref{matrix})  by using the matrices $B^1=(B_{ij})_{i,j =1, ..., n}$ and $B^2=(B_{ij})_{i,j=2, ..., n}$  which are sub-matrices of $(B_{\alpha \beta})_{\alpha, \beta=0, ..., n}$ as follows. We have for $r\neq 0$ that
\begin{equation} \label{spezGl}
\begin{aligned}
& \det B =0 \\
\Leftrightarrow & \det \left(
 \begin{matrix}
  -\frac{\lambda^2}{r} & y_1 & 0 & ... & 0 \\
  y_1 & \lambda u_{11}-r & \lambda u_{12} & ... & \lambda u_{1n} \\
  0 & \lambda u_{21} & \lambda u_{22}-r& ...& \lambda u_{2n} \\
  ... \\
  0 & \lambda u_{n1} & \lambda u_{n2} &... &  \lambda u_{nn}-r
 \end{matrix}
 \right) =0 \\
 \Leftrightarrow & -\frac{\lambda^2}{r}\det B^1 - y_1^2 \det B^2 =0 \\
  \Leftrightarrow & \det B^1 + \frac{y_1^2 r}{\lambda^2}\det B^2 =0 \\
  \Leftrightarrow & \det \left(
 \begin{matrix}
   \lambda u_{11}-r\left(1-\frac{y_1^2}{\lambda^2}\right) & \lambda u_{12} &  ... & \lambda u_{1n} \\
   \lambda u_{21} & \lambda u_{22}-r & ...& \lambda u_{2n} \\
  ... \\
   \lambda u_{n1} & \lambda u_{n2}& ... & \lambda u_{nn}-r
 \end{matrix}
 \right) =0 \\
  \Leftrightarrow & \det \left(
 \begin{matrix}
   \lambda^3 u_{11}-r& \lambda^2 u_{12}& ... & \lambda^2 u_{1n} \\
   \lambda^2u_{21} & \lambda u_{22}-r & ...& \lambda u_{2n} \\
  ... \\
   \lambda^2 u_{n1} & \lambda u_{n2}&... &  \lambda u_{nn}-r
 \end{matrix}
 \right) =0. 
 \end{aligned}
\end{equation}
From the last line we realize that a solution $r$ can be interpreted as eigenvalue of an artificially defined matrix 
\beq
(a^1_{ij}) = 
  \left(
\begin{matrix}
\lambda^3 u_{11}& \lambda^2 u_{12} & ... & \lambda^2 u_{1n} \\
\lambda^2 u_{21} & \lambda u_{22} & ... & \lambda u_{2n} \\
  ... \\
   \lambda^2 u_{n1} & \lambda u_{n2} & ...& \lambda u_{nn}
 \end{matrix}
 \right)
\eeq
which arises as a $\lambda$ multiple of the Hessian of $u$ at which the first row is multiplied by $\lambda$ and after this also  the first column
of the resulting matrix. We further abbreviate
\begin{equation}
  (a_{ij}) = 
  \left(
\begin{matrix}
\lambda u_{11}& ... & \lambda u_{1n} \\
  ... \\
   \lambda u_{n1} & ... & \lambda u_{nn}
 \end{matrix},
 \right).
 \quad
 \end{equation}
We summarize that the zeros of Equation (\ref{matrix}) can be written as eigenvalues of the matrix $(a^1_{ij})$ as long as $y_2=...=y_n=0$ and $y_1$ is variable. 

An analogous calculation shows that this phenomenon is not specific for the first variable. Defining for $r=1, ..., n$  the matrix $(a^r_{ij})$ as the matrix which is obtained by multiplying row $r$ and then column $r$ in $(a_{ij})$ with $\lambda$ we can write the zeros of Equation (\ref{matrix}) as eigenvalues of the matrix $(a^r_{ij})$  in the case where we vary $y_r$, $r$ fixed, and fix $y_i=0$ for $i \neq r$.

Applying these deliberations for the purpose of expressing pure derivatives up to the second order of $\tilde F= \tilde F(\lambda^{-3}r_i)$ from (\ref{1_6_})  
we may rewrite
\begin{equation}
\tilde F = \tilde F(\lambda^{-3}r_i) = \tilde F(\tilde a^r_{ij})
\end{equation}
where 
\begin{equation}
(\tilde a^r_{ij})= \lambda^{-3}(a^r_{ij})
\end{equation}
if $(y, t)=(0, ..., 0, y_r, 0, ..., 0, t)$. Note that deviating from our explanation above we indeed consider also the extra factor $\lambda^{-3}$ here but there holds 
\beq
\lambda=\lambda_{kk}=1, \quad \lambda_k=0, \quad k=1, ..., n,
\eeq 
in $y=0$ which will make no problems.
We have in $(0,\bar t)$ that
\begin{equation} \label{in_point_case}
\frac{\partial \tilde F}{\partial y_k} = \tilde F^{ii}a^k_{ii,k} \quad \wedge \quad \frac{\partial^2\tilde F}{\partial {y^2_k}}= -3 \tilde F^{ii}a^k_{ii}+\tilde F^{ii}a^k_{ii,kk}+\tilde F^{ij,rs}a^k_{ij,k}a^k_{rs,k}
\end{equation}
where we do not sum over $k$ and where we used \cite[Lemma 2.1.9]{CP} to deduce that $\tilde F^{ij}$ is diagonal. (Still we will in the sequel formally sometimes sum over the full range of $i,j$).
 Note that $\tilde F^{ij}$ and $\tilde F^{ij, rs}$ are here evaluated in $\tilde a^k_{ij}=a^k_{ij}$ in the point $(0, \bar t)$. But the latter is the same for all $k$ leading to the simplified expression \eqref{in_point_case}.

Now we can recapitulate the mathematical mechanism from \cite[Lemma 2.4]{1}
in a formally expanded shape.
Differentiating (\ref{1_6_}) gives in $(0, \bar t)$ that
\begin{equation} \label{deriv_u}
 \begin{aligned}
  u_{kt} =& \left(1+|y|^2\right)^{-\frac{1}{2}}y_k \log \tilde F + \sqrt{1+|y|^2}\frac{1}{\tilde F}
  \tilde F^{ij}\tilde a^k_{ij,k}+g_k \\
  u_{kkt}=& \log \tilde F - \frac{1}{\tilde F^2}\tilde F^{ij}\tilde a^k_{ij,k}\tilde F^{rs}\tilde a^k_{rs,k}
  +\frac{1}{\tilde F}\tilde F^{ij,rs}\tilde a^k_{ij,k}\tilde a^k_{rs,k}\\
  & +\frac{1}{\tilde F}\tilde F^{ij} a^k_{ij,kk}-\frac{3}{\tilde F}\tilde F^{ii}a^k_{ii}+g_{kk},
  \end{aligned}
\end{equation}
here, we do not sum over $k$.
Coming back to \eqref{phi_inequality} and  \eqref{phi_inequality2}  and summarizing and rewriting the resulting inequalities we arrive
in $(0, \bar t)$ at
\begin{equation} \label{term_1000}
\begin{aligned}
0 \ge& \sum_{k,l} \frac{1}{\tilde F}\tilde F^{kl}\varphi_{kl}-\varphi_t \\
=& \sum_k \frac{1}{\tilde F}\tilde F^{kk}\varphi_{kk}-\varphi_t \\
=& \sum_k\frac{1}{\tilde F} \tilde F^{kk}u_{kk11}+\frac{1}{\tilde F}\tilde F^{kk}u_{11}\tau_k \\
& + \gamma \{\sum_k\frac{1}{\tilde F}\tilde F^{kk}u_{kk}^2 [1+\frac{(\gamma-2)(u_k-p_k)^2}
{1+\sum (u_i-p_i)^2
+(u+p_{n+1})^2}]\\
& +\sum_{i}(u_i-p_i)(\frac{1}{\tilde F}\sum_{r,s}\tilde F^{rs}u_{irs}-u_{it})-\frac{1}{\tilde F}\sum_k
\tilde F^{kk}u_{kk}(u+p_{n+1})\\
& -(u+p_{n+1})(u_t+p_{n+1,t})+\sum_i(u_i-p_i)p_{i,t}\}Q^{\frac{\gamma-2}{2}}-u_{11t}
\end{aligned}
\end{equation}
and we can estimate this further from below (where we drop the summation signs) by
\begin{equation} \label{3.24}
\begin{aligned}
&\frac{1}{\tilde F}\tilde F^{kk}u_{11}-\log \tilde F+\frac{1}{\tilde F^2}
\tilde F^{ij}\tilde a^1_{ij,1}\tilde F^{rs}\tilde a^1_{rs,1}-\frac{1}{\tilde F}\tilde F^{ij,rs}\tilde a^1_{ij,1}\tilde a^1_{rs,1} \\
& -\frac{1}{\tilde F}\tilde F^{kk} a^1_{kk,11}+3d_0-g_{11}  + \frac{1}{\tilde F}\tilde F^{kk}u_{kk11} \\
& + \gamma \{(\gamma-1)\frac{1}{\tilde F}\tilde F^{kk}u_{kk}^2-(u_i-p_i)g_i\\
& +\frac{1}{\tilde F} \tilde F^{rs}(u_{rsi}-\tilde a^i_{rs,i})(u_i-p_i) -\frac{1}{\tilde F}\tilde F^{kk}u_{kk}(u+p_{n+1}) \\
& -(u+p_{n+1})(u_t+p_{n+1, t})+(u_i-p_i)p_{i,t}\}
Q^{\frac{\gamma-2}{2}} 
\end{aligned}
\end{equation}
where we used \eqref{deriv_u}. Note that the factor $\gamma-1$ comes into play by 
replacing the factor 1 by $(\gamma-1)$ in certain positive summands (after having rewritten the $[...]$ bracket as a single fraction) in the third line from below in \eqref{term_1000} and thereby obtaining an estimate from below. We continue our estimate by estimating \eqref{3.24} from below by the term
\begin{equation} \label{3.43}
\begin{aligned}
&\frac{1}{\tilde F}\tilde F^{kk}u_{11}-\log \tilde F \\
& +\frac{1}{\tilde F}\tilde F^{kk}(u_{kk11}- a^1_{kk,11})+3d_0-g_{11} \\
& + \gamma \{(\gamma-1)\frac{1}{\tilde F}\tilde F^{kk}u_{kk}^2-(u_i-p_i)g_i+\frac{1}{\tilde F}\tilde F^{rs}(u_{rsi}-\tilde a^i_{rs,i})(u_i-p_i)\\
& - d_0(u+p_{n+1})-(u+p_{n+1})(u_t+p_{n+1,t})+(u_i-p_i)p_{i,t}\}Q^{\frac{\gamma-2}{2}}
\end{aligned}
\end{equation}
where we used 
 (\ref{log_concave}).
We have in $(0, \bar t)$ that
\begin{equation}
u_{ij}=\tilde a^r_{ij}=a^r_{ij} \quad \wedge\quad u_{ij,k} = \tilde a^r_{ij,k} =a^r_{ij,k} \quad \wedge \quad
 a^1_{rr,11} = \mu u_{rr}+u_{rr;11} 
\end{equation}
where $\mu=3$ if $r=1$ and $\mu=1$ if $r\neq 1$ in the last equation.
We conclude that in $(0, \bar t)$ 
\begin{equation} \label{analoge_Stelle_1}
\begin{aligned}
0 \ge & \frac{1}{\tilde F}\tilde F^{kk}u_{11}-\log \tilde F  -g_{11}
\\
& + \gamma \{(\gamma-1)\frac{1}{\tilde F}\tilde F^{kk}u_{kk}^2-(u_i-p_i)g_i-d_0 (u+p_{n+1})\\
& -(u+p_{n+1})(u_t+p_{n+1,t})+(u_i-p_i)p_{i,t}\}Q^{\frac{\gamma-2}{2}}.
\end{aligned}
\end{equation}
W.l.o.g. let us assume $u_{11}\ge ... \ge u_{nn}$ and we define the ratio $\mu =\frac{u_{11}}{u_{nn}}$. In view
of \cite[Lemma 2.2.4]{CP} and the homogeneity of $\tilde F$
we have
\begin{equation} \label{extra_1}
\frac{1}{\tilde F}\sum_k\tilde F^{kk}u_{11}=\sum_k \frac{1}{\tilde F}\tilde F^{kk}u_{11}
\ge  \mu \frac{1}{\tilde F}\tilde F^{nn}u_{nn}\ge d_0 \frac{\mu}{n}
\end{equation}
where we do not sum over $n$ 
and
\begin{equation}\label{extra_2}
 \sum_k\frac{1}{\tilde F}\tilde F^{kk}u_{kk}^2 \ge  \frac{1}{\tilde F}\sum_k\tilde F^{kk}u_{kk}u_{nn} =
 d_0 \frac{u_{11}}{\mu}.
\end{equation}
From (\ref{2.2}) and (\ref{2.3}) we deduce  that $|u+p_{n+1}|\le cD$ and $|u_i-p_i|\le cD$. Summarized we obtain 
\begin{equation}
\mu + (c D)^{\gamma-2} \frac{u_{11}}{\mu}
\le \log \tilde F +c + cQ^{\frac{\gamma-2}{2}}D(1+|u_t|+|H_t|)
\end{equation}
and hence
\begin{equation}
(c D)^{2-\gamma} \mu +\frac{u_{11}}{\mu} \le cD^{2-\gamma}\log u_{11} + cD^{2-\gamma}+cD(1+\log u_{11}).
\end{equation}
Now  estimating
\beq
u_{11}=\mu \frac{u_{11}}{\mu}
\eeq
by the product of the right-hand sides of the previous inequalities implies the claim.

\end{proof}

\begin{cor} \label{korollar}
We assume part (i) of Assumption \ref{ass1}.
For any $\gamma \in (1,2]$ there exists $\delta=\delta(\gamma)>0$ such that
\begin{equation}
r(t) \ge \frac{\delta R(t)^2}{1+\sup_{\tau\le t}R^{\gamma}(\tau)}.
\end{equation}
\end{cor}
\begin{proof}
Use Lemma \ref{lemma_2.2} and Lemma \ref{lemma_1}.
\end{proof}

In the following lemma we estimate $H_t$ from below. In view of Lemma \ref{lemma_1} and Equation (\ref{flow_H}) this immediately implies a lower bound for the principal radii of curvature.

\begin{lem} \label{lemma_2}
We assume part (i) of Assumption \ref{ass1}.
There exists a constant $c$ depending only on $n$, $r_0$, $R_0$, $f$ and initial data such that
\begin{equation}
\inf \{H_t(x,t):(x,t)\in S^n \times [0, T]\} \ge -c.
\end{equation}
\end{lem}
\begin{proof} We adapt the proof of \cite[Lemma 2.6]{1} where the first part is word by word the same as in that reference and  is presented here in order to make clear the setting and the second part 
requires some adaptions.
Let 
\begin{equation}
q(t) = \frac{1}{|S^n|}\int_{S^n}x H(x,t)d \sigma(x)
\end{equation}
be the Steiner point of $X(\cdot, t)$. Then there exists a positive $\delta$ which depends only on $n$, $r_0$
and $R_0$ so that 
\begin{equation}
H(x,t)-q(t)\cdot x \ge 2\delta.
\end{equation}
We assume that the function 
\begin{equation}
\psi(x,t) = \frac{H_t(x,t)}{H(x,t)-x \cdot q(t)-\delta}
\end{equation}
attains its negative infimum on $S^n \times[0, T]$ at $x=(0, ..., 0, -1)$ and
$\bar t \in (0, T]$ and that $(u_{ij})$ is diagonal. Let $u$ be the restriction of $H$ to $x_{n+1}=-1$ as before. Then
\begin{equation}
\psi(y,t) = \frac{u_t(y,t)}{u(y,t)-q(t)\cdot (y, -1)-\delta \sqrt{1+|y|^2}}
\end{equation}
attains its negative minimum at $(0, \bar t)$. Hence in this point we have
\begin{equation} \label{psi_time_derivative}
0 \ge \psi_t = \frac{u_{tt}}{u+q_{n+1}(t)-\delta}-\frac{u_t(u_t+\frac{dq_{n+1}}{dt})}{(u+q_{n+1}(t)-\delta)^2},
\end{equation}
\begin{equation} \label{ableitung0}
0 = \psi_k = \frac{u_{tk}}{u+q_{n+1}(t)-\delta}-\frac{u_t(u_k-q_k(t))}{(u+q_{n+1}(t)-\delta)^2}
\end{equation}
and
\begin{equation} \label{psi_space_derivative}
0 \le \psi_{kk} = \frac{u_{tkk}}{u+q_{n+1}(t)-\delta}-\frac{u_t u_{kk}}{(u+q_{n+1}(t)-\delta)^2}
+ \frac{\delta u_t}{(u+q_{n+1}(t)-\delta)^2}.
\end{equation}
Using the notation from the proof of Lemma \ref{lemma_1} we get on the other hand by differentiating (\ref{1_6_}) that in $(0, \bar t)$
\begin{equation} \label{second_time_derivative_u}
u_{tt}= \frac{1}{\tilde F}\tilde F^{ij}u_{ijt}.
\end{equation}
We have in $(0, \bar t)$ using that $(\tilde F^{ij})$ is diagonal 
\begin{equation} \label{fkk}
\begin{aligned}
0 \le& \sum \frac{1}{\tilde F}\tilde F^{kk}\psi_{kk}-\psi_t \\
\le& \frac{\delta u_t \frac{1}{\tilde F}\sum \tilde F^{kk}-\frac{1}{\tilde F}\tilde F^{kk}u_{kk}u_t+u_t(u_t+\frac{dq_{n+1}}{dt})}{(u+q_{n+1}-\delta)^2}
\end{aligned}
\end{equation}
where we used \eqref{psi_time_derivative}, \eqref{psi_space_derivative} and \eqref{second_time_derivative_u} (all partly) repeatedly.
Since $u_t$  is negative at $(0, \bar t)$, it follows that
\begin{equation} \label{growth}
\begin{aligned}
\frac{1}{\tilde F} \sum_k \tilde F^{kk} \le& \frac{c}{\delta}(1+|u_t|) \\
\le& \frac{c}{\delta}(1+ \log \tilde F^{-1})
\end{aligned}
\end{equation}
where we used the homogeneity of $\tilde F$ and where $c=c(f, R_0)$. Furthermore, we applied the logarithm law saying that $\log\left(\frac{1}{r}\right)=-\log r$ for
$r>0$.

We choose $i_0 \in \{1, ..., n\}$ such that
\begin{equation}
u_{i_0i_0} = \min_{1 \le i \le n}u_{ii}>0
\end{equation}
and hence 
\begin{equation}
u_{i_0i_0}^{d_0}\tilde F(1, ..., 1) \le \tilde F=\frac{1}{d_0}\tilde F^{ii}u_{ii} \le c \tilde F^{i_0i_0}u_{i_0i_0}
\end{equation}
in view of the homogeneity of $\tilde F$ and \cite[Lemma 2.2.4]{CP}.
Hence we estimate
\begin{equation}
\begin{aligned}
\sum_k \tilde F^{kk} &\ge \tilde F^{i_0i_0} \\
& \ge \frac{\tilde F}{cu_{i_0i_0}}
\end{aligned}
\end{equation}
and deduce from (\ref{growth}) that
\begin{equation}
(u_{i_0i_0})^{-1}\le c(1+\log ((u_{i_0i_0})^{-1})
\end{equation}
so that $u_{i_0i_0} \ge c>0$ and the claim follows.

\end{proof}

We need versions of Lemma \ref{lemma_1}, Corollary \ref{korollar} and Lemma \ref{lemma_2} which
hold in the case (ii) of Assumption \ref{ass1}.
For Lemma \ref{lemma_1} we obtain the following analogon.
\begin{lem} \label{new_lemma_1}
We assume case (ii) of Assumption \ref{ass1}. There exist constants $c_1, c_2>0$ such that
 \begin{equation}
 \begin{aligned}
  \sup  \{H_{\xi\xi}(x,t): (x,t) \in S^n \times  [0, T], \xi \in T_xS^n, |\xi|=1\} 
  \le c_1(1+D^{c_2}),
  \end{aligned}
 \end{equation}
 where $D=\sup\{d(t): t\in [0, T]\}$ and $d(t)$ is the diameter of $X(\cdot, t)$. 
\end{lem}
\begin{proof} 
We follow the proof of Lemma \ref{lemma_1} word by word in the beginning.
Instead of (\ref{log_concave}) it obviously suffices that $\log \tilde F$ is concave to come from \eqref{3.24}
to \eqref{3.43}. Then in the sequel
we arrive analogously to (\ref{analoge_Stelle_1}) at the following inequality.
Namely,  we have in $(0, \bar t)$ that
 \begin{equation}\label{extra_3}
\begin{aligned}
0 \ge & \frac{1}{\tilde F}\tilde F^{kk}u_{11}-\log \tilde F  
-g_{11} \\
& + \gamma \{(\gamma-1)\frac{1}{\tilde F}\tilde F^{kk}u_{kk}^2-(u_i-p_i)g_i-d_0 (u+p_{n+1})\\
& -(u+p_{n+1})(u_t+p_{n+1,t})+(u_i-p_i)p_{i,t}\}Q^{\frac{\gamma-2}{2}},
\end{aligned}
\end{equation}
especially $(u_{ij})$ is assumed to be diagonal.
W.l.o.g. let us assume $u_{11}\ge ... \ge u_{nn}$ and we define the ratio $\mu =\frac{u_{11}}{u_{nn}}$. In view
of \cite[Lemma 2.2.4]{CP}, the homogeneity of $\tilde F$ and \eqref{new_condition}
we have
\begin{equation} \label{extra_1}
\frac{1}{\tilde F}\sum_k\tilde F^{kk}u_{11}=\sum_k \frac{1}{\tilde F}\tilde F^{kk}u_{11}
\ge  \mu \frac{1}{\tilde F}\tilde F^{nn}u_{nn}\ge d_0 \frac{\mu}{n}\left(\frac{1}{\mu}\right)^{\eta}\ge c \mu^{1-\eta}
\end{equation}
where we do not sum over $n$ (in the last but one expression)
and
\begin{equation}\label{extra_2}
 \sum_k\frac{1}{\tilde F}\tilde F^{kk}u_{kk}^2 \ge  \frac{1}{\tilde F}\sum_k\tilde F^{kk}u_{kk}u_{nn} =
 d_0 \frac{u_{11}}{\mu}.
\end{equation}
From (\ref{2.2}) and (\ref{2.3}) we deduce (we derived that already in the previous proof) that $|u+p_{n+1}|\le cD$ and $|u_i-p_i|\le cD$. Putting (\ref{extra_3}), (\ref{extra_1}) and (\ref{extra_2}) 
together we obtain 
\begin{equation}
\mu^{1-\eta} + (c D)^{\gamma-2} \frac{u_{11}}{\mu}
\le \log \tilde F +c + cQ^{\frac{\gamma-2}{2}}D(1+|u_t|+|H_t|)
\end{equation}
and hence
\begin{equation}
(c D)^{2-\gamma} \mu^{1-\eta} +\frac{u_{11}}{\mu} \le cD^{2-\gamma}\log u_{11} + cD^{2-\gamma}+cD(1+\log u_{11}).
\end{equation}
From the last but one inequality we get by taking the power $\frac{1}{1-\eta}$ that
\beq
\begin{aligned}
\mu \le& \{\log \tilde F +c + cQ^{\frac{\gamma-2}{2}}D(1+|u_t|+|H_t|)\}^{\frac{1}{1-\eta}} \\
\le& c \{\log \tilde F \}^{\frac{1}{1-\eta}} +c \{Q^{\frac{\gamma-2}{2}}D(1+|u_t|+|H_t|)\}^{\frac{1}{1-\eta}} + c.
\end{aligned}
\eeq
We conclude that
\beq
u_{11} \le c\left(c+ D^{c_1}(\log u_{11})^{c_2}\right)
\eeq
with some positive constants $c_1, c_2$. This implies the claim.
\end{proof}
Furthermore, we have the following analoga with proofs as before. 

\begin{cor} \label{new_korollar}
We assume case (ii) of Assumption \ref{ass1}. There exist $c_1, c_2>0$ such that
\begin{equation}
r(t) \ge \frac{c_1 R(t)^2}{1+\sup_{\tau\le t}(R(\tau))^{c_2}}.
\end{equation}
\end{cor}

\begin{lem} \label{new_lemma_2}
We assume case (ii) of Assumption \ref{ass1}. There exists a constant $c$ depending only on $n$, $r_0$, $R_0$, $f$ and initial data such that
\begin{equation}
\inf \{H_t(x,t):(x,t)\in S^n \times [0, T]\} \ge -c.
\end{equation}
\end{lem}
\begin{proof}
The proof is as before where we can now use in \eqref{fkk} the concavity and homogeneity of degree one to deduce via \cite[Lemma 2.2.19]{CP}
that
\beq
\sum_k \tilde F^{kk}\ge \tilde F(1, ..., 1).
\eeq
The remaining argument is then similar.
\end{proof}

Using a comparison principle and comparing the flow (\ref{flow_H}) with the ODE
\begin{equation}
\frac{\partial \rho}{\partial t} = \log \left(\frac{\rho^{d_0}}{F(1, ..., 1)}\right)M, \quad \rho(0)=\rho_0,
\end{equation}
where $M= \max\{f(x): x \in S^n\}$ and $\rho_0$ sufficiently large, we obtain that $H(x,t)$ is bounded in any finite time interval. Furthermore, its gradient is also bounded by (\ref{gradient_estimate}). From Krylov-Safonov estimates and parabolic regularity theory, cf. \cite{Krylov}, one gets that problem (\ref{flow_H}) has for $H_{\Theta}\in C^{4+\al}(S^n)$ a unique $C^{4+\al, 2+\frac{\al}{2}}$
solution in a maximal interval $[0, T^{*})$, $T^{*}\le \infty$ and since $H_{\Theta}$ is even of class $C^{\infty}$ in our case that this solution is also of class $C^{\infty}$. For the outer radius $R(t)$ of $X(\cdot, t)$
we have
\begin{equation} \label{outer_radius}
\lim_{t \uparrow T^{*}}R(t) = 0 
\end{equation}
if $T^{*}$ is finite.

\section{Proof of Theorem \ref{new_main_result} under Assumption \ref{ass1} (i)}\label{proof_of_main_result}
\begin{proof}[Proof of Theorem \ref{new_main_result} (i) and (ii) in the case (i) of Assumption \ref{ass1}]
(i)
We follow  the proof of \cite[Theorem A]{1} but use different arguments to deduce convergence to a translating solution. 
Let $m=\inf_{S^n}f$ and $M=\sup_{S^n}f$. If the initial hypersurface $X_{\Theta}$ is a sphere of radius $\rho_0>\left(\frac{F(1, ..., 1)}{m}\right)^{\frac{1}{d_0}}$, the solution $X(\cdot, t)$ to the equation
\begin{equation}
\frac{\partial X}{\partial t} = -\log \frac{F}{m}\nu, \quad X(\cdot,0) = X_{\Theta},
\end{equation}
remains to be spheres and the flow expands to infinity as $t\rightarrow \infty$. On the other hand, if $X_{\Theta}$
is a sphere of radius less than $\left(\frac{F(1, ..., 1)}{M}\right)^{\frac{1}{d_0}}$, the solution to
\begin{equation}
\frac{\partial X}{\partial t} = -\log \frac{F}{M}\nu, \quad X(\cdot,0) = X_{\Theta},
\end{equation}
is a family of spheres which shrinks to a point in finite time. By the comparison principle and Remark \ref{10} the solution $X(x,t)$ of (\ref{1}) will shrink to a point if $\Theta$ is small enough, and will expand to infinity if $\Theta>0$ is large.

Hence using  Corollary \ref{korollar} we obtain that the sets
\begin{equation} \label{definition_set_B}
\begin{aligned}
A =& \{\Theta>0: X(\cdot, t) \text{ shrinks to a point in finite time}\} \\
B =& \{\Theta>0: X(\cdot, t) \text{ expands to infinity as } t \rightarrow \infty\}
\end{aligned}
\end{equation}
are non-empty and open since the solution $X(x,t)$ of (\ref{1}) on a fixed finite time interval $[0, T)$ depends continuously on $\Theta$. We define
\begin{equation}
\Theta_{*} = \sup A
\end{equation}
and
\begin{equation}
\Theta^{*} = \inf B.
\end{equation}
and deduce $\Theta_{*} \le \Theta^{*}$ from the comparison principle.

Using Corollary \ref{korollar} we deduce that for any $\Theta \in [\Theta_{*}, \Theta^{*}]$ the inner radii of $X(\cdot, t)$ have a uniform positive lower bound and the outer radii are uniformly bounded from above, furthermore, $T^{*}=\infty$ in view of (\ref{outer_radius}). 
Hence (\ref{flow_H}) is uniformly parabolic and we have uniform 
bounds for $D_t^kD_x^lX(\cdot, \cdot)$ if $k+l \ge 1$, $k \ge 0$  and $l \ge 0$ on $S^n\times [0, \infty)$. 

(ii) Let $\Theta \in [\Theta_{*}, \Theta^{*}]$.
We shall use a method from \cite{4}
to show that our solution that exists for all positive times
converges to a translating solution. The main difference from our case to
\cite{4} is that we argue on the level of a derivative of 
the support function  while \cite{4} uses a
graphical representation of the flow hypersurfaces. 

One easily checks that a family of smoothly evolving uniformly
convex hypersurfaces represented by its family of support functions $\tilde H(\cdot, t)$ 
is translating 
iff there is $\xi \in \mathbb{R}^{n+1}$
so that
\begin{equation}\label{translating_sphere}
 \tilde H(x, t) = \tilde H(x, 0) + t\xi x, \quad x \in \mathbb{R}^{n+1}.
\end{equation}
Let us fix $1 \le \gamma \le n+1$ and let $e_{\gamma}$ denote the corresponding standard basis vector. 
Differentiating the 
homogeneous degree one extension (not relabeled) of (\ref{translating_sphere}) with respect to $x$
in direction $e_{\gamma}$ we get
\begin{equation} \label{translating_scalar}
\frac{\partial}{\partial x^{\gamma}}\tilde H(x,t) =  \frac{\partial}{\partial x^{\gamma}}
\tilde H(x, 0)+t\xi_{\gamma}. 
\end{equation}
Hence $\frac{\partial}{\partial x^{\gamma}}\tilde H(\cdot, t)$ is a scalar translating function. 
Conversely, if (\ref{translating_scalar}) holds then $\tilde H$ satisfies (\ref{translating_sphere}).
Note, that $\tilde H(0,t)=0$ and that $\frac{\partial}{\partial x^{\gamma} }\tilde H(\cdot, t)$ is 
homogeneous of degree zero.

Let $H$ be a solution of (\ref{flow_H}). We denote the homogeneous degree one extension of 
$H$ to $\mathbb{R}^{n+1}$ again by $H$ and the homogeneous degree 0 extension of $f$ to 
$\mathbb{R}^{n+1}\setminus\{0\}$ also by $f$. We recall the flow equation for $H$
\begin{equation} \label{recall_flow_H}
 \frac{\partial H}{\partial t} = \log \tilde Ff \quad \text{in} \quad S^n\times [0, \infty),
\end{equation}
where $\tilde F = \tilde F(r_i)$ and $r_i$, $i=1, ..., n$, are the principal radii of $M(t)$ given as non-zero eigenvalues 
of the Hessian matrix 
$\left(\frac{\partial^2H}{\partial x_{\al}\partial x_{\be}}\right)_{\alpha, \beta=1, ..., n+1}$.
Using the homogeneity of $H$ this can be rewritten 
as a flow equation for $H$ on $\left(\mathbb{R}^{n+1}\setminus \{0\}\right) \times [0, \infty)$
\begin{equation} \label{rewritten_flow}
\begin{aligned}
\frac{\partial H}{\partial t}(x,t) = & |x|\frac{\partial H}{\partial t}\left(\frac{x}{|x|},t\right) \\
=  & |x| \log \tilde F f
\end{aligned}
\end{equation} 
where $\tilde F = \tilde F(r_i)$ and $r_i$, $i=1, ..., n$, are the principal radii of $M(t)$ given as non-zero eigenvalues 
of the matrix 
$\left(|x|\frac{\partial^2H}{\partial x_{\al}\partial x_{\be}}\right)_{\alpha, \beta=1, ..., n+1}$ at $\left(x, t\right)$ and $f=f(x)$. 
We will replace (formally) the curvature function $\tilde F$ in Equation (\ref{rewritten_flow}) by a curvature function $\hat F$ which depends
on all eigenvalues $r_{\alpha}$, $\alpha=1, ..., n+1$, of $\left(|x|\frac{\partial^2H}{\partial x_{\al}\partial x_{\be}}\right)_{\alpha, \beta=1, ..., n+1}$ at $\left(x, t\right)$ and satisfies $\tilde F(r_i) = \hat F(r_{\alpha})$
in order to be notational in the framework of the introduction.

a) In the case that $\tilde F \in C^{\infty}(\bar \Gamma_+)$ and $\tilde F_{|\partial\Gamma_+}=0$
we define
\begin{equation}
 \hat F(r_1, ..., r_{n+1}) = \sum_{\alpha_0=1}^{n+1}\tilde F(\hat r^{\alpha_0})
\end{equation}
where $\hat r^{\alpha_0} = (r_1, ..., r_{\alpha_0-1}, r_{\alpha_0+1}, ..., r_{n+1})$.

b) Let us consider the general case (which includes case a)). In view of our a priori estimates there are constants $b_1, b_2>0$ so that the non-zero eigenvalues of $\left(\frac{\partial^2H}{\partial x_{\al}\partial x_{\be}}\right)_{\alpha, \beta=1, ..., n+1}$ on $S^n \times [0, \infty)$ are in the interval $[b_1, b_2]$.  Having the later application of the argumentation in \cite[Subsection 6.2]{4} in mind we remark that this property carries over to the Hessians of convex combinations of $H(\cdot, t_1)$ and $H(\cdot, t_2)$ with arbitrary $t_1, t_2>0$. Note that the vector $x$ is a zero eigenvector  of the Hessian of $H$ at every $(x,t)\in S^n\times [0, \infty)$. We define
 \begin{equation}
 \hat F(r_1, ...,  r_{n+1}) = \tilde F(\hat r) + \check r
\end{equation}
on the set
\begin{equation}
\Omega = \bigcup_{1 \le \alpha \le n+1} I_{\alpha}
\end{equation}
where
\begin{equation}
I_{\alpha} =  \left(\frac{b_1}{2}, \infty\right)\times ... \times \left(\frac{b_1}{2}, \infty\right) \times  \left(-\frac{b_1}{2}, \frac{b_1}{2}\right) \times \left(\frac{b_1}{2}, \infty\right) \times ... \times  \left(\frac{b_1}{2}, \infty \right)
\end{equation}
with factor $(-\frac{b_1}{2}, \frac{b_1}{2})$ at position $\alpha$ and 
where $\check r=r_{\alpha_0}=\min_{\alpha=1, ..., n+1}r_{\alpha}$, $\alpha_0 \in \{1, ..., n+1\}$ suitable, and $\hat r=(r_1, ... r_{\alpha_0-1}, r_{\alpha_0+1}, ..., \alpha_{n+1})$.
We have
$\tilde F(r_i) = \hat F(r_{\alpha})$. 
From standard arguments we deduce that $\hat F$ defines in the way explained in the introduction a differentiable function on the set of symmetric matrices with eigenvalues in $\Omega$. 

Differentiating (\ref{rewritten_flow}) we get the following equation for $H_{\gamma}$
\begin{equation} \label{translating}
 \frac{\partial}{\partial t}H_{\gamma}(x,t) =|x|^2\frac{1}{\hat F}\hat F^{\al\be}
 \left(H_{\ga}\right)_{\al\be}+|x|_{\gamma} \log \hat F f +|x|\frac{f_{\ga}}{f}+d_0|x|_{\gamma}
\end{equation}
where $\hat F^{\al\be}$ is uniformly elliptic and the coefficients of the elliptic operator on the right-hand side depend on the derivative of $H_{\gamma}$ and $x$ and not explicitly on $t$ or  $H_{\gamma}$.

Applying the argumentation from \cite[Subsection 6.2]{4} more or less word by word to the function $H_{\gamma}$ on $\left(B_{\rho_2}(0)\setminus B_{\rho_1}(0)\right)\times [0, \infty)$, $0< \rho_1 < 1< \rho_2$ both close to 1, where we use that $H_{\gamma}$ is homogeneous of degree zero (instead of the compactness of the spatial domain  and the boundary condition when we apply maximum principles) we obtain that $H_{\gamma}$ converges smoothly to a translating solution of (\ref{translating}) with a translating speed $\xi=\xi(\Theta, \gamma)\in \mathbb{R}$.

(iii) We show $\Theta_{*}=\Theta^{*}$.  From (ii) we know that for every $\Theta \in [\Theta_{*}, \Theta^{*}]$ the solution $X(x,t)$ of (\ref{1}) with initial value $X_{\Theta}$ converges to a translating solution with a certain translating speed $\xi_{\Theta}\in \mathbb{R}^{n+1}$. 

a) We show that there is $\xi \in \mathbb{R}^{n+1}$ so that $\xi_{\Theta}=\xi$ for all $\Theta \in [\Theta_{*}, \Theta^{*}]$. For this let
$\Theta_{*} \le \Theta_1 < \Theta_2 \le \Theta^{*}$,  differentiating (\ref{flow_H}) in $\Theta$ gives
\begin{equation}
\begin{aligned}
\frac{\partial H'}{\partial t} =& A^{ij}(\nabla_{i}\nabla_{j}H'+H'\delta_{ij}) \\
H'(0)=&\frac{d}{d\Theta}H_{\Theta}
\end{aligned}
\end{equation}
where $(A^{ij})$ is the inverse of $(\nabla_{i}\nabla_{j}H+\de_{ij}H)$. By the maximum principle
\begin{equation}
H'(x,t) \ge \min_{S^n} \frac{d}{d\Theta}H_{\Theta}(x).
\end{equation}
Thus 
\begin{equation} \label{ungleichung}
\begin{aligned}
c(x,t)+t(\xi_{\Theta_2}-\xi_{\Theta_{1}})x =&H_{\Theta_2}(x, t)-H_{\Theta_1}(x, t) \\
\ge& \int_{\Theta_{1}}^{\Theta_2}\min_{S^n} \frac{d}{d\Theta}H_{\Theta} >0
\end{aligned}
\end{equation}
where $c(x,t)$ is a uniformly bounded function and where we used Lemma \ref{11}. This implies $\xi_{\Theta_1}=\xi_{\Theta_2}$.

b) Using a) we deduce from the comparison principle that $H_{*}=H^{*}$ where $H_{*}$ and $H^{*}$ is the solution of $F=e^{\xi x}f$
starting from $H_{\Theta_{*}}$ and $H_{\Theta^{*}}$, respectively.
We deduce from (\ref{ungleichung}) with $\Theta_1=\Theta_{*}$ and $\Theta_2=\Theta^{*}$ by using that $H_{\Theta_2}(\cdot, t)-H_{\Theta_1}(\cdot, t)$ converges uniformly to zero as $t\rightarrow \infty$ that $\Theta_{*}<\Theta^{*}$ leads to a contradiction, hence $\Theta_{*}=\Theta^{*}$.

(iv) We show that the normalized hypersurface $X(\cdot, t)/r(t)$ converges to a unit sphere in case $\Theta>\Theta^{*}$ and follow for it the lines of \cite[Theorem B]{1}.
 Since $X$ is expanding, we may w.l.o.g. assume at $t=0$ that it contains the ball $B_{R_1}(0)$ where $R_1>1+\left(\frac{F(1, ..., 1)}{m}\right)^{\frac{1}{d_0}}$, $m=\inf_{S^n}f$, and  that it is contained in the ball  $B_{R_2}(0)$ where $R_2>0$ is sufficiently large. 
For $i=1,2$ let $X_i(\cdot, t)$ be the solution of (\ref{1}) where $f$ is replaced by $m$ and $M=\sup_{S^n}f$ respectively and $X_i(\cdot, 0)=\partial B_{R_i}$. The $X_i(\cdot, t)$ are spheres and their radii $R_i(t)$ satisfy
\begin{equation} \label{ungleichung3}
c^{-1}(1+t)\log(1+t) \le R_1(t) \le R_2(t) \le c(1+(1+t)\log^2(1+t))
\end{equation}
for some $c>0$. We deduce from the ODEs for the $R_i$ , $i=1, 2$, that
\begin{equation}
\begin{aligned}
\frac{d}{dt} (R_2(t)-R_1(t)) \le& d_0 \log \frac{R_2(t)}{R_1(t)} + c \\
\le& c \log \log (1+t) + c
\end{aligned}
\end{equation}
where the last inequality uses (\ref{ungleichung3})
and hence
\begin{equation}
R_2(t)-R_1(t) \le c (1+ t \log \log(1+t))
\end{equation}
so that
\begin{equation}
\lim_{t\rightarrow \infty}\frac{R_2(t)-R_1(t)}{R_1(t)}=0. 
\end{equation}
By the comparison principle $X(\cdot, t)$ is pinched between $X_2(\cdot, t)$ and $X_1(\cdot, t)$ 
and, furthermore, we deduce that $X(\cdot, t)/r(t)$ converges to the unit sphere uniformly.

The proof of Theorem \ref{new_main_result} (i) and (ii) is finished in the case (i) of Assumption \ref{ass1}.
\end{proof}



\section{Proof of Theorem \ref{new_main_result} in the remaining cases via diameter bound}\label{additional_section}
Throughout this section (with an exception in a short passage below which we indicate explicitly) we assume that case (ii) of Assumption \ref{ass1} holds and prove Theorem
\ref{new_main_result} in this case by presenting (only) the arising differences
to the proof of Theorem
\ref{new_main_result} (i) and (ii) under Assumption \ref{ass1} (i) in the previous section. 
The crucial difference is that 
the set $B$, cf. (\ref{definition_set_B}), has now to be redefined in view of Corollary \ref{new_korollar} 
(which states only a poor lower bound for the inradii) and that its redefined version 
cannot be identified as open immediately, so further work is necessary.
Deviating from (\ref{definition_set_B}) as far as $B$ is concerned
we define the intervals $A$  and $B$ (intervals due to a comparison principle) now as
\begin{equation} \label{definition_interval}
\begin{aligned}
A =& \{\Theta>0: X(\cdot, t) \text{ shrinks to a point in finite time}\} \\
B =& \{\Theta>0: \diam X(\cdot, t) \text{ converges to infinity as } t \rightarrow \infty\}
\end{aligned}
\end{equation}
and $\Theta_{*}=\sup A$ and $\Theta^{*}=\inf B$. 
 Similarly as in the proof of part (i) of Theorem \ref{new_main_result} under Assumption \ref{ass1} (i) 
 one obtains that $[\Theta_{*}, a_1]$ is empty whenever $\Theta_{*}<a_1<\Theta^{*}$ and hence $\Theta_{*}=\Theta^{*}$.
 Clearly, by comparing with spheres, $A$ is open.   
 The openness of $B$ which is a priori not clear
 under our present assumptions given by case (ii) of Assumption \ref{ass1} follows from the following Lemma \ref{extra_lemma}
 which will be proven by using geometric observations. Once this openness is established following the 
 lines of the proof in the previous section we obtain 
 convergence to a translating solution which translates with a certain speed $\xi\in \mathbb{R}^{n+1}$ and with
 $F$-curvature when considered as a function of the normal given by 
 \begin{equation} \label{representation_curvature}
 e^{\xi\cdot x}f(x), \quad x\in S^n,
 \end{equation} 
 where $\xi\in \mathbb{R}^n$ fixed. Hence, inclusively Lemma \ref{extra_lemma}, the proofs of 
 Theorem \ref{new_main_result} (i) and (ii) are complete.
 
 
 
 The following lemma states the crucial diameter bound which holds uniformly for all $t\in [0, \infty)$.
\begin{lem}\label{extra_lemma}
Let $\Theta=\Theta_{*}$ and $X(\cdot, t)$ (also denoted by $M(t)$) be the flow hypersurfaces of the flow
\beq\label{flow1}
\frac{\partial X}{\partial t}=-\log \left(\frac{F}{f}\right)\nu, \quad X(S^n, 0)=M_{\Theta},
\eeq 
then there is $c>0$ such that
\beq \label{diameter_inequality}
\diam X(\cdot, t) \le c
\eeq
for all $t \in [0, \infty)$, especially the flow 
exists for those times.
\end{lem}

{\it Picture to the proof:} 
The idea of the proof is to assume the contrary and to construct a barrier for the flow from a certain time $T>0$ on, where $T$ is appropriately chosen. The barrier consists of a cylinder and two spheres. All these three objects evolve according to \eqref{flow1} but are regularly artificially
 adjusted so that they always form roughly spoken a long dumbbell and, furthermore,  so that this dumbbell encloses the flow hypersurface at every time.
 Then we argue that we can arrange everything so that the diameter of this dumbbell decreases at least with a sufficiently large  positive average speed for a sufficiently large period of time which leads to the desired contradiction, see also Figure \ref{setup}. 

\begin{figure}[t]
\centering
\includegraphics[width=7.3cm]{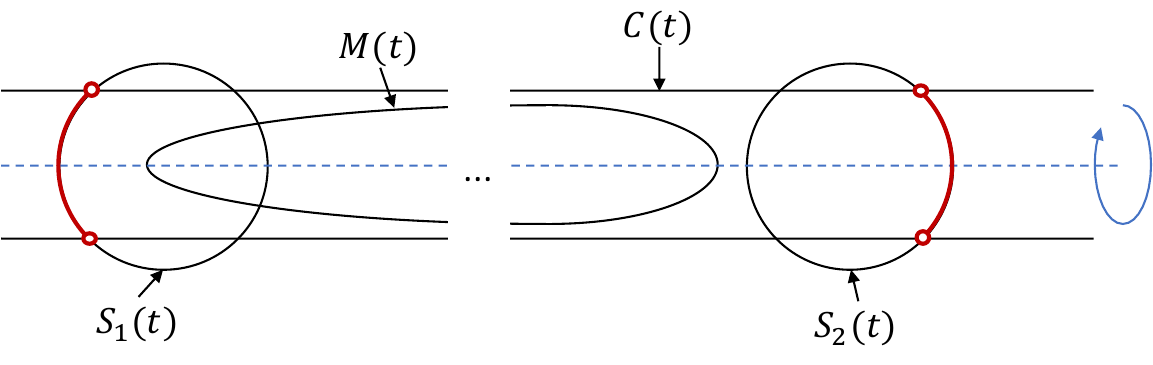}
\caption{Section with the $y=0$ plane of the rotationally symmetric flow hypersurface $M(t)$ and the barrier consisting of two spheres $S_i(t)$, $i=1,2$, and a cylinder $C(t)$. All objects are rotationally symmetric with respect to the same dashed horizontal $x$-axis. The three dots indicate that the middle part of $M(t)$ (not plotted) is very long compared to the plotted ends of $M(t)$. 
All four objects evolve under the same flow but the spheres and the cylinder are regularly artificially adjusted. Thereby, especially
the diameters of the spheres are always (only) slightly larger than the cross-section diameter of the cylinder.
 } \label{setup}
\end{figure}
 
\begin{proof}[Proof of Lemma \ref{extra_lemma}]
Before we begin with the indirect proof below we make some preparatory comments on the proof and notation.
The proof uses several lemmas which inclusive their proofs will be interspersed into the running text proving Lemma \ref{extra_lemma}.
We present the proof of Lemma \ref{extra_lemma} only in the case $n=2$. The general case can be treated analogously. For a subset $S \subset \mathbb{R}^{n+1}$
we denote the set of its inner points by $\inte S$ and its convex hull by $\co S$.


We make the following observation.
Let $R>0$ so that
\beq
M_{\Theta}\subset B_{R-1}(0)
\eeq
and $\tilde M(t)$ be the flow hypersurfaces of the flow \eqref{flow1} starting from the sphere $\partial B_R(0)$ at time $t=0$ with $f$ in the flow speed \eqref{flow1}  replaced by $\max_{S^n}f$.
In view of a comparison principle similarly
as in part (iv) of the proof of Theorem \ref{new_main_result} we conclude 
 that the $\tilde M(t)$ enclose $M(t)$ as long as both flows exist.
 From the ODE for the radii, cf. \eqref{ungleichung3}, of the $\tilde M(t)$ we get that for all $t_0\in (0, \infty)$ the following holds: If the flow \eqref{flow1} exists for all $t \in [0, t_0)$ then 
there exists $c_0=c_0(t_0)>0$ such that 
\beq \label{c_0_upper_bound}
\diam X(\cdot, t) \le c_0 
\eeq
for all $t \in [0, t_0)$. Hence by definition of $\Theta_{*}$ (excluding that $\diam X(\cdot, t)$ goes to zero in finite time) and due to our a priori estimates given by Lemma \ref{new_lemma_1}, Corollary \ref{new_korollar} and
Lemma \ref{new_lemma_2} as well as Equation (\ref{flow_H}) 
also uniform higher order estimates of the flow hold for all $t \in [0, t_0)$. This implies existence of the flow for all $t\in [0, \infty)$.
The crucial point which remains to show is that $c_0$ in \eqref{c_0_upper_bound} can be chosen the same for all
$t\in [0, \infty)$.  Hereby, we conclude the introductory remarks  and begin with the indirect argumentation.

This means that we {\it argue 
indirectly}, set $d(t)=\diam X(\cdot, t)$, $t>0$,  and assume the following:
\bea \label{contraposition}
& \text {{\it  There is a sequence of times} }(t_k)_{k \in \mathbb{N}}, t_k>0, t_k\rightarrow \infty \text{ {\it as} } k\rightarrow \infty  \\
& \text{ \it {such that} }
d(t_k)\rightarrow \infty \text{ {\it as} } k\rightarrow \infty.
\eea
We introduce some notation. 
We fix an Euclidean coordinate system $(x,y,z)$ so that the $x$-axis is the axis of the rotational symmetry of $M_{\Theta}$. (The $x$-axis is unique up to translation, scaling 
and orientation and after fixing a choice for the $x$-axis the $y$-axis and $z$-axis are unique up to rotations around the $x$-axis and orientation and we fix here also a certain choice.)
Since the rotational symmetry is preserved under the flow \eqref{flow1} all $M(t)$, $t\ge 0$, are rotationally symmetric with respect to the $x$-axis.
Since $M(t)$, $t \ge 0$, is closed, uniformly convex and rotationally symmetric with respect to the $x$-axis there are exactly two intersection points of $M(t)$ with the $x$-axis
and their coordinates can be written as
\beq \label{anfangende}
(p(t), 0, 0), \quad (q(t), 0, 0)
\eeq
with suitable $p(t), q(t) \in \mathbb{R}$, $p(t)<q(t)$.
For $t \ge 0$ we denote the width of $M(t)$ by
\beq
\width(t) := q(t)-p(t),
\eeq
the inradius of $M(t)$ by $r(t)$, the smallest principal curvature of $M(t)$ in $p\in M(t)$ by $\kappa_{min}(t,p)$ and 
the largest principal curvature of $M(t)$ in $p\in M(t)$ by $\kappa_{max}(t,p)$. We furthermore set
\beq
\kappa_{min}(t) = \min_{p\in M(t)}\kappa_{min}(t,p)
\eeq
and 
\beq
\kappa_{max}(t) = \max_{p\in M(t)}\kappa_{max}(t,p)
\eeq
for $t\ge 0$.
We summarize for convenience some relevant facts in a remark.

\begin{rem}
In view of Lemma \ref{new_lemma_1}, Corollary \ref{new_korollar},  Lemma \ref{new_lemma_2} and Equation (\ref{flow_H})  as well as the definition of $\Theta_{*}$ the functions
$\kappa_{min}(t)$, $\kappa_{max}(t)$ and $r(t)$ are positive for $t\in [0, \infty)$. Clearly, they are also continuous.
Note that there are no constant positive lower bounds for $\kappa_{min}(t)$ and $r(t)$ available and also no constant upper bound for 
$\kappa_{max}(t)$  which hold uniformly for all $t\in [0, \infty)$. 
\end{rem}

\begin{lem} \label{claim1}
 There is $d_0>0$ so that for all $t>0$ we have
\begin{equation}
 M(t) \subset C_{d_0}=\{(x,y,z)\in \mathbb{R}^3: |y|^2+|z|^2\le d_0^2\}.
\end{equation}
\end{lem}

\begin{proof}
We prove Lemma \ref{claim1} indirectly and assume for it the contrary. Then for arbitrary large $d_1>0$
we find $t_0>0$ and $x_0\in \mathbb{R}$ such that
\beq \label{slice_1}
\circl(x_0, d_1):=\{ (x_0, d_1\cos \theta, d_1 \sin \theta):\theta \in [0, 2\pi]\} \subset 
\inte \left(\co M(t_0)\right),
\eeq
where we used that $M(t_0)$ is rotationally symmetric with respect to the $x$-axis. 
For given large $d_0>0$ we may choose $t_0>0$, $\Delta t>0$ and $d_1>d_0$ such that
\beq \label{condition_123}
\max_{M(t_0)}z=d_0, \quad \max_{M(t)}z\ge d_0\quad  \forall t \in [t_0, t_0+\Delta t], \quad \max_{M(t_0+\Delta t)}z=d_1.
\eeq
We would like to allow for $d_0$ and $\Delta t$ being chosen arbitrarily large. 
For it we consider the cylindrical flow starting from the cylinder $C_{d_0}$, $d_0>0$ large assumed, at time $t=t_0$ as outer barrier, denote its cross-section radius at time $t$ by $c(t)$ and obtain the ODE
\beq
\dot c(t) = -\log \left(\frac{F}{f}\right)= \log \left(\frac{f}{F}\right)=\log\left(\frac{fc}{F(1,0)}\right)\le \frac{fc}{F(1,0)}
\eeq
which implies that
\beq
c(t_0+\Delta t) \le e^{\frac{\max f}{F(1,0)}\Delta t}{c(t_0)},
\eeq
which is the desired quantitative relation. It tells us that for every hypothetical realization of  $\Delta t>0$ 
which makes the right-hand side 
staying below the value of $d_1$ there is some larger choice which can be taken indeed.
Note that we may here assume that
\begin{equation}
\width(t) \le c=c(f) \quad \forall t \in [t_0, t_0+\Delta t]
\end{equation}
since, otherwise, one of the $M(t)$, $t\in [t_0, t_0+\Delta t]$, encloses by convexity a large ball, contradicting the choice of 
$\Theta^{*}$.

After having fixed that preparatory setting (consisting of \eqref{condition_123} and the assumption that $\Delta t$ as well as $d_0$ are chosen sufficiently large) we give spatially averaged speed estimates on the 'approximately flat sides' of $M(t)$ as follows. 
Let $B^2_r(0)$, $1\le r << \frac{d_0}{2}$, be a two-dimensional ball around 0 in the $\{x=0\}$ plane. We define for $t\in [t_0, t_0+\Delta t]$ the set $V(t)$ as that connected component of  (the two of)
\beq
\pr_t^{-1}\left(B^2_r(0)\right) 
\eeq
where larger $x$ values are attained (clear from picture) and where
\beq
\pr_t: M(t)\rightarrow \{x=0\}
\eeq
denotes the orthogonal projection of $M(t)$ on $\{x=0\}$
 as graphs over $B^2_r(0)$. This is possible by continuity for fixed $t\in [t_0, t_0+\Delta t]$ and $r=r(t)>0$ small and from the global picture 
 we may assume $r=1$ uniformly for all $t\in [t_0, t_0+\Delta t]$ provided $d_0$ is large.
This means we write
\beq
V(t) = \graph u(t, \cdot)_{|B^2_r(0)}
\eeq
with a smooth function $u:[t_0, t_0+\Delta t]\times B^2_r(0)\rightarrow \mathbb{R}$. (This graphical representation is
understood so that for $(0, y,z)\in B^2_r(0)$ the via $u(t, \cdot)$ assigned point on $V(t)$ is $(u(t, y,z), y, z)$.)
 A simple observation shows that the convexity of the $M(t)$ implies that for given $\varepsilon>0$ we may assume that
\beq \label{smallness_derivative}
|D_yu(t, y,z)|+|D_zu(t, y,z)| \le \varepsilon \quad \text{ and } \quad |V(t)|\ge 1
\eeq 
for $(t,y,z)\in [t_0, t_0+\Delta t]\times B^2_r$ provided $d_0$ is sufficiently large. Such a property holds even without assuming rotational symmetry, here the argument is especially easy and left to the reader.

We estimate the spatial average speed of $V(t)$ in the direction of the positive $x$-axis as follows.
Using Jensen's inequality we have with $U=B^2_r(0)$
\begin{equation} \label{average_speed_graph}
 \begin{aligned}
  \frac{1}{|U|}\int_U\frac{d}{dt} u =& -\frac{|V(t)|}{|U|}\frac{1}{|V(t)|} \int_U\sqrt{1+|Du|^2}\log\left(\frac{F}{f}\right)\\
  \ge&  -\frac{|V(t)|}{|U|}\log
  \left(\frac{1}{|V(t)|} \int_U \sqrt{1+|Du|^2}\frac{F}{f}\right)\\
  \ge& -\frac{|V(t)|}{|U|}\log
  \left(\frac{1}{|V(t)|} \frac{2}{\min_{S^n}f} \int_U F \right).
 \end{aligned}
\end{equation}
In view of \cite[Lemma 2.2.20]{CP} we have that
\beq
F \le \frac{F(1,1)}{2}H
\eeq
so that we may estimate
\beq
\int_U F \le  \frac{F(1,1)}{2} \int_U H.
\eeq
Choosing $\varepsilon$ according to \eqref{smallness_derivative}, we estimate by divergence theorem that
\begin{equation}
\begin{aligned}
0<& 
 \int_UH\\
 =& \int_{U}\dive \left(\frac{Du}{\sqrt{1+|Du|^2}}\right)\\
 =& \int_{\partial U}\left<\frac{Du}{\sqrt{1+|Du|^2}}, \omega\right> \\
 <& c\varepsilon
 \end{aligned}
\end{equation}
where $\omega$ denotes the outer unit normal of $\partial U$.
This shows that for given $\eta>0$ we have
\beq
\int_U \frac{d}{dt}u \ge \eta
\eeq
provided $\varepsilon$ is sufficiently small (which can be achieved by choosing $d_0$ sufficiently large).
Integration from $t_0$ to $t_0+\Delta t$ yields
\beq
\int_Uu(t_0+\Delta t, \cdot)-\int_Uu(t_0, \cdot) \ge \eta \Delta t.
\eeq
Hence there is $(y_0, z_0)\in B^2_r(0)=U$ such that 
\beq
u(t_0+\Delta t, y_0, z_0) \ge \eta \Delta t + u(t_0, y_0, z_0).
\eeq
In view of \eqref{smallness_derivative} and assuming $\varepsilon$ small we  conclude that
\beq
u(t_0+\Delta t, y,z) \ge \frac{\eta}{2}\Delta t + u(t_0, y, z)
\eeq
for all $(y,z)\in U$. 
By symmetry we get
\beq \label{5.30}
\width(t_0+\Delta t) \ge \eta \Delta t + \width(t_0).
\eeq
Let $\tilde x \in \mathbb{R}$, $R>0$ be so that the flow $\eqref{flow_H}$ starting from $\partial B_R(\tilde x, 0, 0)$
expands to infinity.
In order to achieve \eqref{5.30} we assumed that $d_0$ is large but the previous deliberations showed that we may also assume that $\Delta t$ (and $\eta \Delta t$) is large. 
Hence we may assume that both are so large that 
up to translation we have  the inclusion
\beq
\partial B_R(\tilde x, 0, 0) \subset \inte \co M(t_0+r \Delta t)
\eeq
which contradicts the choice of $\Theta^{*}$. This proves Lemma \ref{claim1} .
\end{proof}

\begin{rem}
There is $d_1>0$ such that the maximum extension of $M(t)$ perpendicular to the $x$-axis is never below $d_1$, i.e.
\beq
\forall\ t \ge 0\  \exists\ x' \in \mathbb{R} : \quad \diam \left(M(t)\cap \{x=x'\}\right) \ge d_1.
\eeq
\end{rem}
\begin{proof}
Suppose the assertion is wrong and choose $d_1>0$ so small that the cylinder $\{(x,y,z)\in \mathbb{R}^3:y^2+z^2=(2d_1)^2\}$ contracts under flow \eqref{flow1} to a line and at the same time serves as an outer barrier of the $M(t)$. This contradicts the choice of $\Theta^{*}$.
\end{proof}

The following lemma shows that for a given time span $\Delta t>0$ there is a $\delta>0$ such that for all $t_0>0$ the following holds:
All points in $\mathbb{R}^3$ having distance from $\co M(t_0)$ larger than $\delta$ remain disjoint from $M(t)$
for all $t \in [t_0, t_0+\Delta t]$. The crucial point hereby is that $\delta$ does not depend on $t_0$. The statement trivializes immediately
when $\delta$ is allowed to depend on $\diam M(t_0)$ because then we can compare the flow with a barrier consisting of a single large sphere flowing according to \eqref{flow1} and $f$ therein replaced by $\max_{S^n} f$.

\begin{lem}\label{claim2}
For every time span  $\Delta t>0$ exists a $\delta > 0$ so that for all $t_0>0$ the following holds:
If $p\in \mathbb{R}^3$, $\dist(p, M(t_0))>\delta$ then $p\notin M(t)$ for all $t\in [t_0, t_0+\Delta t]$.
\end{lem}

\begin{proof}
We fix $t_0>0$ and construct a barrier for the flow \eqref{flow_H} starting at time $t=t_0$ from $M(t_0)$.
The claim will follow from an obvious bound for the speed of expansion of the diameter of that barrier.
The barrier consists of a cylinder $C$ and two spheres $S_1, S_2$ where
\bea
C=&\{(x,y,z):\mathbb{R}^3:y^2+z^2=(d_0+1)^2\} \\
S_1=&\partial B_{2d_0}(p(t_0)-1, 0, 0) \\
S_2=&\partial B_{2d_0}(q(t_0)+1, 0, 0)
\eea
where $d_0>0$ is chosen according to Lemma \ref{claim1}.
Furthermore, starting further flows according to \eqref{flow1} at time $t=t_0$
from $C$, $S_1$ and $S_2$, and denoting the corresponding flow surfaces by $C(t)$, $S_1(t)$ and $S_2(t)$, $t\ge t_0$, respectively, we claim that 
\beq \label{containment_relation}
M(t) \subset \co(S_1(t) \cup S_2(t))
\eeq
for $t\in [t_0, t_0+\Delta t]$ with some $\Delta t=\Delta t(C, S_1, S_2)>0$.
 Namely, choosing $\Delta t>0$
so small that the $C^1$-distance of $C(t)$ and $C$ and the $C^1$-distance of $S_i(t)$ and $S_i$, $i=1,2$,
is not too large for $t \in [t_0, t_0+\Delta t]$, we may conclude from the existence of a minimal
$t_1 \in (t_0, t_0+\Delta t]$ with
\beq
M(t_1) \cap \partial \left(\co(S_1(t) \cup S_2(t))\right) \neq \emptyset
\eeq
that for some minimal $\tilde t\in (t_0, t_1]$ we may choose
\beq \label{case2}
p \in M(\tilde t )\cap C(\tilde t)\neq \emptyset,
\eeq 
or that there is (w.l.o.g., otherwise take $S_2$ instead of $S_1$)
\beq \label{case1}
p \in M(\tilde t)\cap S_1(\tilde t),
\eeq
 $\tilde t=t_1$,  in which both intersecting surfaces have (in both cases) a common outer unit normal. 
In both cases there is an open neighborhood $U$ of $p$ in $\mathbb{R}^{n+1}$ and a time $\Delta \tilde t>0$ so that those parts of both intersecting flows
which are  in $U$ for $t\in [\tilde t-\Delta \tilde t, \tilde t]$ are for that times graphs over their common tangent plane $T_pM(\tilde t)$ in $p$ at time $\tilde t$. Now we apply Lemma \ref{maximum_principle} to conclude that in both cases \eqref{case2}  and \eqref{case1}
the two flows in $U$ that intersect at time $\tilde t$ already intersect at an earlier time $\hat t < \tilde t$ which is a contradiction to the 
choice of $\tilde t$.
This proves \eqref{containment_relation}.

Let us interprete  \eqref{containment_relation} in other words. It  gives us full quantitative control in the sense of an upper bound for the extension 
of $M(t)$ for $t\in [t_0, t_0+\Delta t]$ which depends only on $d_0$, especially it bounds
\beq
p_{new}=\inf_{t\in [t_0, t_0+\Delta \tilde t]}p(t)
\eeq
from below and
\beq
q_{new}=\sup_{t\in [t_0, t_0+\Delta t]}q(t)
\eeq
from above. Together with Lemma \ref{claim1} this controls the extension of $M(t)$ for $t\in [t_0, t_0+\Delta t]$. Now we iterate the whole argument, this means for the first repetition 
that we replace $p(t_0)$, $q(t_0)$ and $M(t_0)$ by
$p_{new}$, $q_{new}$ and $M(t_0+\Delta t)$, respectively, and get
 a quantitative estimate for the extension of $M(t_0+2\Delta t)$ and updated values for $p_{new}$ and $q_{new}$.
Note that $d_0$ and hence also $\Delta t$ are always the same in each iteration. 
Furthermore, the values of $p_{new}$ and $q_{new}$ change in each iteration step at most by a uniform  additive constant
which  depends only on $d_0$.
Note that $\Delta t$ together with the additive constant trivially define an upper bound for a 'speed of extension of $M(t)$, timely averaged over $[t_0, t_0+\Delta t]$', hence the claim of the lemma follows.
\end{proof}

\begin{ass}  \label{ass_long_extension}
 For the rest of the proof of Lemma \ref{extra_lemma} we fix a time interval $I=[t_0, t_1]$ with $0<t_0<t_1<\infty$ and assume by using \eqref{contraposition} that
 \begin{enumerate} 
 \item $\Lambda_0:=d(t_0)\le d(t)$
for $t\in I$, 

\item  $|I|$ is large and $\Lambda_0$ is large compared to $|I|$ which is possible in view of Lemma \ref{claim2}.

\item  Furthermore, we may assume w.l.o.g. that $p(t_0)=0$, meaning that the surface at time $t_0$ contains the origin of the coordinate system  and that it is contained completely in the $\{x\ge 0\}$ half space.
For $t\in I$ and $x'\in \mathbb{R}$ we denote sections of the flow surfaces $M(t)$ perpendicular to the $x$-axis by $C(t,x')=\{x=x'\}\cap M(t)$ and may assume that $C(t, \frac{1}{10}\Lambda_0)$ and 
$C(t, \frac{9}{10}\Lambda_0)$ are both nonempty for all $t \in I$. Note that this is possible in view of Item (1), Lemma \ref{claim2}, the connectedness of $M(t)$, $t\in I$, and since $\Lambda_0$ is large compared to $|I|$.
\end{enumerate}
\end{ass}

For the proof of the following lemma we will use  refined but similar estimates for averaged speeds as in the proof of Lemma \ref{claim1}.

\begin{lem}\label{claim3} There is $\bar \delta>0$ which can be chosen small for $\Lambda_0$ sufficiently large so that
\begin{equation} \label{beh1}
 \diam C(t,x)\le \frac{2+\bar \delta}{f(0,0,1)}\frac{F(1, ..., 1)}{2}
\end{equation}
for all $t \in \tilde I=[t_0, \frac{t_0+t_1}{2}]$ and all $x\in [\frac{2}{10}\Lambda_0,\frac{8}{10}\Lambda_0]$.
\end{lem}

\begin{proof}
We prove Lemma \ref{claim3} indirectly. For it we assume that $\Lambda_0>0$ is large, $\bar \delta>0$ small and that there are $\tilde t\in \tilde I$, $x'\in [\frac{2}{10}\Lambda_0,\frac{8}{10}\Lambda_0]$
so that (\ref{beh1}) does not hold for $t=\tilde t$ and $x=x'$. W.l.o.g. we may assume that 
 (\ref{beh1}) does not hold for $t=\tilde t$ and
$x\in (x'-w, x'+w)$ where $0<w<1$ is a fixed width. We 
consider the surface parts
\begin{equation}
 R=R(t, x') = \{x'-w\le x\le x'+w\}\cap M(t), \quad t \in I.
\end{equation}
We introduce some notation. For $t\in \tilde I$, $p =(p_x,p_y,p_z)\in R(t, x')$, we denote 
the outer unit normal  vector of $M(t)$ in $p$ by $\nu(t,p)$ and the unit radial vector 
by $\tilde \nu(p)=(0, \frac{p_y}{l}, \frac{p_z}{l})$ where $l=(p_y^2+p_z^2)^{\frac{1}{2}}$. In view of Lemma \ref{claim1}
and since $\Lambda_0$ is assumed to be large we have by using the convexity of $M(t)$ that
\beq \label{normals_similar}
\left<\tilde \nu(p), \nu(t,p)\right> \ge 1-\delta,
\eeq
where $\delta=\delta(\Lambda_0, d_0)$ is a positive constant which can be assumed to be arbitrarily
small if $\Lambda_0$ is sufficiently large.
In view of \eqref{normals_similar} and standard arguments using the implicit function theorem we may write $R(t, x')$ as graph 
in cylindrical coordinates around the $x$-axis which means the following 
\beq
R(t) = \{(x, u(t,x) \cos \theta, u(t,x)\sin \theta ): x \in (x'-w, x'+w), \theta \in [0, 2\pi]\}
\eeq
where
\beq
u(t, \cdot) \in C^{\infty}((x'-w, x'+w)), u\in C^0(\tilde I \times (x'-w, x'+w)).
\eeq
By using e.g. a first order expansion of the flow $X$ in a neighborhood around $(t,p)$
with
\beq
p=(x, u(t,x) \cos \theta, u(t,x) \sin \theta)
\eeq
so that this neighborhood contains $(t+h, p(h))$ with
\beq
p(h)=(x, u(t+h,x) \cos \theta, u(t+h,x) \sin \theta), 
\eeq
$h>0$ small, 
we conclude that the speed in outward radial direction (calculated as limit of a difference quotient) of the graph is given by
\beq
\frac{\partial u}{\partial t}(t,x) = -\log \left(\frac{F}{f}\right)\left<\nu(t,p), \tilde \nu(t,p)\right>
\eeq
where
 $H$ and $f$ are evaluated on $M(t)$ in $p$.
Especially, $\frac{\partial u}{\partial t}$ is
a continuous function in $\tilde I \times (x'-w, x'+w)$; we omit using/explaining higher regularity of $u$.

For $t \in \tilde I$ we set
\beq
\bar u(t) = \int_{x'-w}^{x'+w}u(t,x)dx, \quad \mu (t)= \int_{x'-w}^{x'+w}\left<\tilde \nu, \nu\right>dx
\eeq
and estimate for $t', t'' \in \tilde I$, $t'<t''$, that
\bea \label{speed_estimate2}
\bar u(t'')-\bar u( t') =&  \int_{t'}^{t''}\int_{x'-w}^{x'+w} \frac{\partial u}{\partial t}(t,x)dxdt \\
=&   \int_{t'}^{t''}\mu(t)  \frac{1}{\mu(t)}\int_{x'-w}^{x'+w} -\log\left(\frac{F}{f}\right)\left<\nu, \tilde \nu \right>dxdt \\
\ge&  - \int_{t'}^{t''}\mu(t)  \log\left(\frac{1}{\mu(t)}\int_{x'-w}^{x'+w} \frac{F}{f} \left<\nu, \tilde \nu \right>dx\right)dt \\
\ge&  - \int_{t'}^{t''}\mu(t)  \log\left(\frac{1}{\mu(t)}\int_{x'-w}^{x'+w} \frac{F(1, 1)}{2}\frac{H}{f} \left<\nu, \tilde \nu \right>dx\right)dt
\eea
where we used Jensen's inequality in the last but one inequality.
By assuming that $\Lambda_0$ is sufficiently large we may assume that $\delta$ from above in \eqref{normals_similar} is arbitrarily small and that, furthermore,
\beq \label{label_smallness}
|\mu(t)-2w|+|f(p)-f(0,0,1)| +|u(t,x)-\frac{1}{2w}\bar u(t)| \le \delta
\eeq
for all $t \in \tilde I$, $ p \in R(t, x')$ and  $ x\in (x'-w, x'+w)$.
Since $R(t, x')$ is part of a surface of revolution we can  rewrite the mean curvature $H$ of $R(t, x')$ by using this symmetry as follows (this well-known representation can be found e.g. in \cite[Equations (1)-(4)]{Ken}).
We parametrize for $t \in \tilde I$ the following piece of a strictly convex curve 
\beq
\gamma(t)= \{z=0\}\cap R(t, x') \cap \{y\ge 0\}
\eeq
 (in the $(x,y)$-plane) with respect to arc length  
\beq
\gamma(t)=\gamma(t,\theta)=(x(t,\theta), y(t,\theta), 0)
\eeq
with
$\theta \in [0, \delta(t)]$, $\delta(t)>0$ suitable,
so that 
\beq
x(t,0)=x'-w \le x(t,\delta(t))=x'+w, \quad \left\{\frac{\partial x}{\partial \theta}(t, \theta)\right\}^2+\left\{\frac{\partial y}{\partial \theta}(t, \theta)\right\}^2=1,
\eeq
then we have for the mean curvature $H(\gamma(t, \theta))$ of $M(t)$ in $\gamma(t, \theta)$ that
\beq
H(t, \theta):= H(\gamma(t, \theta))=
\frac{1-\frac{\partial}{\partial \theta}\left(y(t, \theta)\frac{\partial y}{\partial \theta}(t, \theta)\right)}{y(\theta)\frac{\partial x}{\partial \theta}(t, \theta)}.
\eeq
For every sufficiently large choice of $\Lambda_0$ we may assume that the tangent vector of $\gamma(t, \cdot)$
is approximately parallel to the $x$-axis, meaning that
\beq
\left|\frac{\partial y}{\partial \theta}(t,\theta)\right|\le \delta,  \quad \left|1-\frac{\partial x}{\partial \theta}(t,\theta)\right|\le \delta
\eeq
for all $t\in \tilde I$, $\theta \in [0, \delta(t)]$. Hence we have for $t\in \tilde I$ that
\beq
\int_0^{\delta(t)}H(t, \theta)d\theta \le \int_0^{\delta(t)}\frac{1}{y(t, \theta)(1-\delta)}d\theta+c\delta
\eeq
with a constant $c$ which holds uniformly for all $t \in \tilde I$.
Since the volume elements corresponding to the graphical and the arc  length parametrization of $\gamma(t, \cdot)$
can be estimated against each other by a factor in $(1-\delta, 1+\delta)$, $\Lambda_0>0$ large assumed, 
we conclude that for all $t\in \tilde I$ we have
\bea
\frac{F(1,1)}{2\mu(t)}\int_{x'-w}^{x'+w}\frac{H}{f}\left<\nu, \tilde \nu\right>dx \le& \frac{F(1,1)}{2\mu(t)} \int_0^{\delta(t)}\frac{H(t, \theta)}{f}d\theta(1+\delta) \\
\le&  \frac{F(1, 1)}{2\mu(t)} \frac{1+\delta}{f(0,0,1)-\delta} \int_0^{\delta(t)}H(t, \theta) d\theta \\
\le&  \frac{F(1, 1)}{2\mu(t)} \frac{1+\delta}{(1-\delta)(f(0,0,1)-\delta)}\int_0^{\delta(t)}\frac{1}{y(t, \theta)}d\theta+c\delta \\
\le&  \frac{1}{\mu(t)} \frac{1+\delta}{(1-\delta)(f(0,0,1)-\delta)}\int_0^{\delta(t)}\frac{f(0,0,1)}{1+\frac{\bar \delta}{8}}d\theta+c\delta \\
\le& \frac{1+c \delta}{1+\frac{\bar \delta}{8}}.
\eea
where $\delta=\delta(\bar \delta, d_0, \Lambda_0)>0$ is sufficiently small which is possible for $\Lambda_0$ sufficiently large and
where the last two inequalities are  valid (only) for $t \in I_{\varepsilon_0}=[\tilde t, \tilde t+\varepsilon_0)$  at the moment where $0<\varepsilon_0<t_1-\tilde t$ is chosen as large as possible so that
\beq \label{lower_b_diam}
\diam C(t) \ge \frac{2+\frac{\bar \delta}{4}}{f(0,0,1)}\frac{F(1, .., 1)}{2} 
\eeq
for $t\in I_{\varepsilon_0}$. Inserting this in \eqref{speed_estimate2} and using a first order Taylor's expansion of the logarithm around 1 gives summarized that
there is a small $c_0=c_0( d_0, \Lambda_0)>0$ such that
\beq \label{stern}
\bar u(t'') \ge \bar u(t')+(t''-t')c_0\bar \delta
\eeq
for all $t', t'' \in I_{\varepsilon_0}$ with $t'<t''$.
On the other hand $t \mapsto \bar u(t)$ is continuously differentiable, hence by taking the limit of the difference quotient we get
\beq
\frac{d}{dt}\bar u(t')= \lim_{t''\rightarrow t', t''>t'}\frac{\bar u(t'')-\bar u(t')}{t''-t'} \ge c_0 \bar \delta.
\eeq
Since $\bar u(t)$ is monotone increasing for $t\ge \tilde t$ with slope at least $c_0\bar \delta$ as long as \eqref{lower_b_diam} holds
and in view of our assumed contraposition to the claim of the lemma as well as \eqref{label_smallness} we deduce that
\beq
\frac{d}{dt}\bar  u(t) \ge c_0\bar \delta
\eeq
for all $t\in \tilde I$ and hence
\beq
\bar u(t) \ge \bar u(\tilde t) + (t-\tilde t)c_0 \bar \delta
\eeq
for all $t \in \tilde I$.
 At a sufficiently large time
$\bar t\in [\tilde t, t_1]$ the surface $M(\bar t)$ has large extension perpendicular to the $x$-axis and in the direction of the $x$-axis.
Note that we use here Assumption \ref{ass_long_extension}.
Hence, by convexity,  $M(\bar t)$ encloses a large sphere, a contradiction to the choice of $\Theta^{*}$.
\end{proof}

We continue the proof of Lemma \ref{extra_lemma}. Similarly as in the proof of Lemma \ref{claim2} we construct an outer barrier for $M(t)$, $t \in [t_0, t_1]$, consisting of two spheres and a cylinder. This time  the adjustment of the barrier happens by using Lemma \ref{claim3} in a more refined way, namely so that the diameter of the barrier is even sufficiently fast decreasing for our purposes (meaning, that we get a contradiction to the assumed contraposition \eqref{contraposition}). 

For it we fix $\bar \delta>0$ small according to Lemma \ref{claim3}
where we assume in order to realize it that $\Lambda_0>0$ is sufficiently large.
In order to specify the barrier we define the evolution of a cylinder $C(t)$ and two spheres $S_1(t)$ and $S_2(t)$ for  $t \in [t_0, t_1]$ as follows. We set
\bea \label{initial_value_barrier}
C(t_0)=&\left\{(x,y,z):\mathbb{R}^3:y^2+z^2=\left(\frac{F(1, ..., 1)}{2}\frac{1+\frac{3}{4}\bar \delta}{f(0,0,1)}\right)^2\right\} \\
S_1(t_0)=&\partial B_{\frac{F(1, ..., 1)(1+\bar \delta)}{2f(0,0,1)}}(p(t_0), 0, 0) \\
S_2(t_0)=&\partial B_{\frac{F(1, ..., 1)(1+\bar \delta)}{2f(0,0,1)}}(q(t_0), 0, 0),
\eea
cf. \eqref{anfangende}.
Clearly, we have by construction
\beq
M(t_0) \subset \inte \left\{\co C(t_0) \cap \co (S_1(t_0)\cup S_2(t_0))\right\}.
\eeq
By assumption (\ref{ass_for_f}) we have that
\begin{equation} 
f(0,0,1)\ge (1+\tilde \delta) \frac{f}{c(n)}.
\end{equation}
For a small period of time $\Delta t>0$ we define the flows $C(t)$, $S_1(t)$ and $S_2(t)$ for $t \in [t_0, t_0+\Delta t]$
by prescribing that the three surfaces flow according to \eqref{initial_value_barrier}
and Equation \eqref{flow1} for these $t$.
For $\Delta t=\Delta t(\bar \delta)$ sufficiently small, the flows $S_i(t)$, $i=1,2$, contract at a positive speed inward directed. This follows from the continuity of the flows and the evaluations of their speeds at time $t=t_0$, for it note that
\begin{equation} \label{speed_estimate}
\begin{aligned}
\frac{F}{f}=&\frac{2f(0,0,1)}{(1+\bar \delta)f} \\
>& \frac{1+\tilde \delta}{1+\bar \delta} \\
>& 1,
\end{aligned}
\end{equation} 
provided $\bar \delta>0$ is sufficiently small. The latter can be achieved by assuming that $\Lambda_0=\Lambda_0(f)$ is sufficiently large. Note that $\tilde \delta$ depends solely on $f$.
Choosing $\Delta t$ possibly smaller and after this $\eta=\eta(\Delta t)>0$ sufficiently small we may assume that for 
$t\in [t_0, t_0+\Delta t]$ the $C^1$-distance between $S_1(t)$ and $S_1(0)$ is not too large
and that on the other hand 
\beq \label{99999}
S_1(t_0+\Delta t) \subset \co \partial B_{\frac{F(1,1)(1+\bar \delta)}{2f(0,0,1)}-\eta}(p(t_0), 0,0)
\eeq
which is a kind of quantified minimum improvement of the barrier.
By symmetry, the analogous statement is true for $S_2(t)$.
For $t\in [t_0, t_0+\Delta t]$ the surfaces $C(t)$ slowly expand, since 
\beq
\frac{H}{f} <1
\eeq
on $C(t_0)$. Choosing $\Delta t>0$ possibly smaller we may assume that
\beq \label{C(t)_is_contained}
C(t) \cap \left\{x=\frac{p(t_0)+q(t_0)}{2}\right\}\subset \inte \co (S_1(t) \cup S_2(t))
\eeq 
for all $t \in [t_0, t_0+\Delta t]$
since that is true for $t=t_0$ by construction. In other words the radii of the evolving cylinder and sphere
approach each other but the strict inequality between them which holds in $t=t_0$  is preserved.

Now the following barrier property holds, namely
\beq \label{M(t)_is_contained}
M(t) \subset \co (S_1(t) \cup S_2(t)), \quad M(t) \subset \co C(t)
\eeq 
for $t\in [t_0, t_0+\Delta t]$. 
This can be seen by deducing from the contraposition of \eqref{M(t)_is_contained} via Lemma \ref{maximum_principle} a contradiction, similarly, as we already did this in the proof of Lemma \ref{claim2}.

Note that \eqref{beh1} gives for $t\in [t_0, t_0+\Delta t]$ a 'better estimate' for $M(t)$ than the barrier $C(t)$ since the latter expands during that time. 

While the evolution of $M(t)$ is according to the PDE \eqref{flow1} for all $t\in [t_0, t_1]$, we stop the flows
$C(t)$, $S_1(t)$ and $S_2(t)$ at time $t=t_0+\Delta t$ and adjust them by definition as follows.
We set
\bea
C(t_0+\Delta t)=&\left\{(x,y,z):\mathbb{R}^3:y^2+z^2=\left(\frac{F(1,1)(1+\frac{3}{4}\bar \delta)}{2f(0,0,1)}\right)^2\right\} \\
S_1(t_0+\Delta t)=&\partial B_{\frac{F(1,1)(1+\bar \delta)}{2f(0,0,1)}}(p(t_0)+\eta/2, 0, 0) \\
S_2(t_0+\Delta t)=&\partial B_{\frac{F(1,1)(1+\bar \delta)}{2f(0,0,1)}}(q(t_0)-\eta/2, 0, 0).
\eea
Clearly, we have by construction
\beq
M(t_0+\Delta t) \subset \inte \left\{\co C(t_0+\Delta t) \cap \co (S_1(t_0+\Delta t)\cup S_2(t_0+\Delta t))\right\}
\eeq
in view of \eqref{99999} and \eqref{beh1}.
Note that 
\beq
\diam  \co (S_1(t_0+\Delta t)\cup S_2(t_0+\Delta t)) \le \diam  \co (S_1(t_0)\cup S_2(t_0))-\eta.
\eeq
Iteration of this procedure leads to a diameter estimate for the $M(t)$ of the form
\begin{equation} \label{desired_decay}
d(t_0+k\Delta t) \le d(t_0)+c(f)-k\eta 
\end{equation} 
with a constant 
\beq
c(f)=\frac{F(1,1)(1+ \bar \delta )}{f(0,0,1)}
\eeq
and $k\in \mathbb{N}$ such that
\beq
t_0+k \Delta t \le t_1.
\eeq
But this contradicts Assumption \ref{ass_long_extension} Item (1).
 
The proof of Lemma \ref{extra_lemma} is complete.
\end{proof}

\section*{Acknowledgment}
During the preparation of this work the author has been funded temporarily  by the Deutsche Forschungsgemeinschaft (DFG,
German Research Foundation) project: 319506420 and while revising the paper
by the Deutsche Forschungsgemeinschaft (DFG,
German Research Foundation) project: 404870139.
 We thank the anonymous referee a lot for reading our first version which lacked many details and was elaborated-to-read.
 Following the referee's suggestion we added several more details and explanations.
 In the course of the revision we found additionally some minor technical improvements which lead to a more general main result
while all arguments were basically already contained in our original version.


\begin{thebibliography}{99}
\newcounter{zaehler}
\setcounter{zaehler}{13}
\newcounter{zaehler2}
\setcounter{zaehler2}{2}
\bibitem{Sinestrari} R. Alessandroni, C. Sinestrari: 
{\it Convexity estimates for a nonhomogeneous mean curvature flow}, 
Math. Z. {\bf 266} (2010),  65-82.
\bibitem{100} S. J. Altschuler, L. F. Wu: 
{\it Translating surfaces of the non-parametric mean curvature flow with prescribed contact angle}, 
Calc. Var. Partial Differential Equations {\bf 2} (1994),  101-111.
\bibitem{AndrewsCurve} B. Andrews,
{\it Evolving convex curves},
Calc Var Partial. Differ. Equ. {\bf7}, No. 4, (1998), 315-371.  
\bibitem{AndrewsInventiones} B. Andrews,
{\it Gauss curvature flow: The fate of the rolling stones},
Invent. Math. {\bf 138}, No. 1, (1999), 151-161.  
\bibitem{Andrews} B. Andrews, J. McCoy
{\it Convex hypersurfaces with pinched principal curvatures and flow of convex hypersurfaces by high powers of curvature},
Trans. Am. Math. Soc. {\bf 364}, No. 7, (2012), 3427-3447.  
\bibitem{Angenent} S. Angenent
{\it On the formation of singularities in the curve shortening flow},
J. Differ. Geom. {\bf 33}, No. 3, (1991), 601-633.  
\bibitem{BourniLangfordTinaglia2} T. Bourni, M. Langford and G. Tinaglia: 
{\it On the existence of translating solutions of mean curvature flow in slab regions}, 
arXiv:1805.05173 (2018), 1-25, to appear in Analysis \& PDE.
\bibitem{Brakke} A. Brakke, The Motion of a Surface by its Mean Curvature, Princeton University Press, Princeton, N.J., 1978.  
\bibitem{Brendle} S. Brendle, K. Choi, P. Daskalopoulos
{\it Asymptotic behavior of flows by powers of the Gaussian curvature},
Acta. Math. {\bf 219}, No. 1, (2017), 1-16.  
\bibitem{5} S. Y. Cheng, S. T. Yau, {\it On the regularity of the solution of the $n$-dimensional
Minkowski problem}, Comm. Pure Appl. Math. {\bf 29} (1976), 495-516.
\bibitem{Gauss_flow} K. Chou (K. Tso), 
{\it Deforming a hypersurface by its Gauss-Kronecker curvature}, 
Comm. Pure Appl. Math. {\bf 38} (1985),  867-882.
\bibitem{Gauss_curvature} K. Chou (K. Tso), 
{\it Convex hypersurfaces with prescribed Gauss-Kronecker curvature}, 
J. Differential Geom. {\bf 34} (1991),  389-410.
\bibitem{1} K.-S. Chou and X.-J. Wang, 
{\it A logarithmic Gauss curvature flow and the Minkowski problem}, 
Ann. Inst. H. Poincar\'e Anal. Non Lin\'eaire {\bf 17} (2000), no. 6, 733-751.
\bibitem{2} B. Chow, {\it Deforming convex hypersurfaces by the $n$-th root of the Gaussian curvature},
J. Differential Geom. {\bf 22} (1985), 117-138.
\bibitem{Firey} W.J. Firey,
{\it Shapes of worn stones},
Mathematika {\bf 21},  (1974), 1-11.  
\bibitem{Gage} M. Gage, R.S Hamilton, {\it The heat equation shrinking convex plane curves}, J. Differ. Geom.  {\bf 23} (1986),
69-96.

\bibitem{CP} C. Gerhardt, Curvature problems, Series in Geometry and Topology, vol. 39, International Press of Boston Inc., Somerville, 2006.
\bibitem{G1} C. Gerhardt, {\it Flow of nonconvex hypersurfaces into spheres}, J. Differ. Geom. {\bf 32} (1990),
299-314.
\bibitem{Geroch} R. Geroch, {\it Energy extraction}, Ann. New York Acad. Sci.  {\bf 224} (1973),
108-117.

\bibitem{GuanGuan} B. Guan, P. Guan, {\it Convex hypersurfaces of prescribed curvature}, Ann. Math. {\bf 156} (2002),
655-673.


\bibitem{Ilmanen} D. Hoffmann, T. Ilmanen, F. Mart\'in, B. White, {\it Graphical Translators for Mean Curvature Flow},   Calc. Var. Partial Differ. Equ. {\bf 58} (2019), No. 4, Paper No. 117, 29 p.

\bibitem{Ilmanenoverview} D. Hoffmann, T. Ilmanen, F. Mart\'in, B. White, {\it Notes on translating solitons for Mean Curvature Flow},  arXiv:1901.09101v2 [math.DG].
\bibitem{Huisken0} G. Huisken: 
{\it Flow by mean curvature of convex surfaces into spheres}, 
J. Differ. Geom. {\bf 20} (1984), 237-266.
\bibitem{Huisken} G. Huisken: 
{\it Non-parametric mean curvature evolution with boundary conditions}, 
J. Differ. Equations {\bf 77} (1989), 369-378.
\bibitem{HuiskenSinestrari} G. Huisken, C. Sinestrari: 
{\it Mean curvature flow singularities for mean convex surfaces}, 
Calc. Var. {\bf 8} (1999), 1-14.
\bibitem{Ken} K. Kenmotsu, 
{\it Surfaces of revolution with prescribed mean curvature},
T\^{o}hoku Math. Journ. {\bf 32} (1980), 147-153.  
\bibitem{Heiko} H. Kr\"oner, J. Scheuer:  
{\it Expansion of pinched hypersurfaces of the Euclidean and hyperbolic space by high powers of curvature},
Math. Nachr. {\bf 292}, No. 7, (2019), 1514-1529 . 
\bibitem{Krylov} N. V. Krylov, 
Nonlinear Elliptic and Parabolic Equations of the Second Order,
D. Reidel, 1987.  





\bibitem{Lu} J. Lu, 
{\it Anisotropic inverse harmonic mean curvature flow},
Front. Math. China {\bf 9} (3) (2014), 509-521.  

\bibitem{3} A. V. Pogorelov, The Multidimensional Minkowski Problem, 
J. Wiley, New York, 1978.

\bibitem{Schnuerer}O. Schn\"urer, in: Geometric flows and the geometry of space-time. Based on the summer school and workshop, Hamburg, Germany, September 2016. Cham: Birkh\"auser. 77--121 (2018; Zbl 1422.53003)

\bibitem{4} O. Schn\"urer, 
{\it Translating solutions to the second boundary value problem for curvature flows},
Manuscripta Math. {\bf 108} (2002), 319-347.  
\bibitem{Schulze} F. Schulze, 
{\it Evolution of convex hypersurfaces by powers of the mean curvature},
Math. Z. {\bf 251} (2005), 721-733.  
\bibitem{Schulze2} F. Schulze, 
{\it Convexity estimates for flows by powers of the mean curvature},
Ann. Sc. Norm. Super. Pisa, Cl. Sci. (5) {\bf 5}, No. 2,  (2006), 261-277.  
\bibitem{SpruckXiao} J. Spruck, L. Xiao, 
{\it Complete translating solitons to the mean curvature flow in $\mathbb{R}^3$ with non negative mean curvature}, 2018,
https://arxiv.org/pdf/1703.01003.pdf.
\bibitem{Urbas} J. Urbas, {\it On the Expansion of Starshaped Hypersurfaces by Symmetric Functions of Their Principal Curvatures}, Math. Z. {\bf 205} (1990),
355-372.

\end{thebibliography}
\end{document}